\documentclass[reqno,11pt]{amsart}

\usepackage{amssymb,latexsym,amscd,amsthm,amssymb, amscd, amsfonts,enumerate,array,bbm,bm}
\usepackage[sorted,compressed-cites,sorted-cites,initials]{amsrefs}
\usepackage{youngtab}
\usepackage[cmtip,all]{xy}
\usepackage{geometry}
\usepackage{pifont,mathrsfs,mathtools,stmaryrd}

\newcommand{\ul}[1]{\underline{#1}}

\usepackage[english,francais]{babel}
\usepackage[utf8]{inputenc}

\usepackage[T1]{fontenc}

\usepackage[colorlinks]{hyperref}

%\PassOptionsToPackage[colorlinks]{hyperref}
\hypersetup{%
pdftitle={Generalized Joseph decomposition},
pdfauthor={},
pdfkeywords={},
bookmarksnumbered,
pdfstartview={FitH},
breaklinks=true,
linkcolor=blue,
urlcolor=blue,
citecolor=blue,
bookmarksdepth=2
}%

\usepackage{graphicx}

\numberwithin{equation}{section}

\newtheorem{theorem}{Theorem}[section]

\newtheorem{proposition}[theorem]{Proposition}

\newtheorem{corollary}[theorem]{Corollary}
\newtheorem{lemma}[theorem]{Lemma}
\renewcommand{\Im}{\operatorname{Im}}
\DeclareMathOperator{\ad}{ad}
\DeclareMathOperator{\ch}{ch}
\DeclareMathOperator{\mcat}{mod}

\DeclareMathOperator{\End}{End}
\DeclareMathOperator{\Iso}{Iso}

\DeclareMathOperator{\Hom}{Hom}
\DeclareMathOperator{\id}{id}

\DeclareMathOperator{\Mod}{Mod}
\newcommand{\la}{\langle}
\newcommand{\ra}{\rangle}

\theoremstyle{definition}

\newtheorem{example}[theorem]{Example}

\newcommand{\lact}{\triangleright}
\newcommand{\ract}{\triangleleft}

\newcommand{\lie}[1]{\mathfrak{#1}}
\newcommand{\tensor}{\otimes}

\DeclareFontFamily{U}{mathb}{\hyphenchar\font45}
\DeclareFontShape{U}{mathb}{m}{n}{
      <5> <6> <7> <8> <9> <10> gen * mathb
      <10.95> mathb10 <12> <14.4> <17.28> <20.74> <24.88> mathb12
      }{}
\DeclareSymbolFont{mathb}{U}{mathb}{m}{n}
\DeclareFontSubstitution{U}{mathb}{m}{n}
\DeclareMathSymbol{\smallsquare}        {2}{mathb}{"05}

\def\ZZ{\mathbb{Z}}
\def\QQ{\mathbb{Q}}

\def\gg{\mathfrak{g}}

\def\kk{\Bbbk}

\allowdisplaybreaks[2]

\begin{document}
\newgeometry{margin=1in}

\title[Generalized Joseph's decompositions]{Generalized Joseph's decompositions}
\author{Arkady Berenstein}
\address{Department of Mathematics, University of Oregon,
Eugene, OR 97403, USA} \email{arkadiy@math.uoregon.edu}

\author{Jacob Greenstein}
\address{Department of Mathematics, University of
California, Riverside, CA 92521.} 
 \email{jacob.greenstein@ucr.edu}

\thanks{The authors are partially supported by the NSF grant DMS-1403527 (A.~B) 
and by the Simons foundation collaboration grant no.~245735~(J.~G.).}

\selectlanguage{english}
\begin{abstract}
We generalize the decomposition of~$U_q(\gg)$ introduced by A.~Joseph in~\cite{joseph-mock} and relate it, for $\gg$~semisimple, 
to the celebrated computation of central elements due to V.~Drinfeld (\cite{Dr}). In that case we
construct a natural basis in
the center of $U_q(\gg)$ whose elements behave as Schur polynomials and thus explicitly identify the center
with the ring of symmetric functions.
\end{abstract}
\maketitle 

\section{Introduction and main results}  

\subsection{}
Let $H$ be an associative algebra with unity over a field~$\kk$ and let $\mathscr C$ be a full abelian subcategory closed under submodules of the category $H-\Mod$ of left $H$-modules.
Suppose that we have a ``finite duality'' functor ${}^\star: \mathscr C\to \Mod-H$ with $V^\star\subseteq V^*=\Hom_\kk(V,\kk)$ (with equality
if and only if~$V$ is finite dimensional) with its natural
right $H$-module structure, such that the restriction of the evaluation pairing $\la\cdot,\cdot\ra_V:V\tensor V^*\to \kk$ to $V\tensor V^\star$ is non-degenerate
for all objects $V$ in~$\mathscr C$ (see~\S\ref{subs:pf-main0} for the details).
Following~\cite{Fos}, we define $\beta_V:V\tensor_{D(V)} V^\star\to H^*$ where $D(V)=\End_H V^\star=(\End_H V)^{op}$ by 
$$
\beta_V(v\tensor f)(h)=\la h\lact v, f\ra_V=\la v,f\ract h\ra_V,\qquad v\in V,\,f\in V^\star,\,h\in H,
$$
where $\lact$ (respectively, $\ract$) denotes the left (respectively, right) $H$-action. It is easy to see that~$\beta_V$ is well-defined.
Set $H_V^*=\Im\beta_V$. 
Recall that $V\tensor V^\star$ and $H^*$ are naturally $H$-bimodules. 
The following is essentially proved in~\cite{Fos}*{\S3.1} and~\cite{FD}*{Corollary~1.16}
\begin{proposition}
\begin{enumerate}[{\rm(a)}]
\item\label{prop:Fos-0.a} For all $V\in \mathscr C$, $\beta_V$ is a homomorphism of $H$-bimodules and $H^*_V$ depends only
on the isomorphism class of~$V$. Moreover, if $V,V'\in\mathscr C$ are simple and $H^*_V=H^*_{V'}$ then $V\cong V'$;
\item\label{prop:Fos-0.b} 
 $H^*_{V\oplus V'}=H^*_V+H^*_{V'}$ for all $V,V'\in\mathscr C$. In particular, $H^*_{V^{\oplus n}}=H^*_V$ for all $n\in\mathbb N$.
\item\label{prop:Fos-0.c} 
If $V\tensor_{D(V)}V^\star$ is simple as an $H$-bimodule then $\beta_V$ is injective. 
\item\label{prop:Fos-0.d} If $V$ is simple finite dimensional then $V\tensor_{D(V)}V^\star$ is simple as an $H$-bimodule and hence $\beta_V$ is injective.
%If $V$ is simple then $V\tensor V^\star$ is a simple $H$-bimodule and $\beta_V$ is injective.
%$\beta_V$ factor through to a homomorphism of $H$-bimodules $\underline \beta_V:V\tensor_D V^\star\to H^*_V$
%where $D=(\End_H V)^{op}=\End_H V^\star$. Moreover, if $V$ is simple and $\dim_D V<\infty$ then $\underline{\beta}_V$ is injective;
%\item\label{prop:Fos-0.d} If $V^\star=f_0\ract H$ (respectively, $V=H\lact v_0$) some~$f_0\in V^\star$ (respectively, $v_0\in V$) then the $\kk$-linear map $V\to H^*_V$,
%$v\mapsto \beta_V(v\tensor f_0)$ (respectively, $V^\star\to H^*_V$, $f\mapsto \beta_V(v_0\tensor f)$)
%is an injective homomorphism of left (respectively, right) $H$-modules.
\end{enumerate}
\label{prop:Fos-0}
\end{proposition}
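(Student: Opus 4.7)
For part (a), the bimodule property of $\beta_V$ and its factoring through $V\tensor_{D(V)}V^\star$ are direct computations: using $\beta_V(v\tensor f)(h)=\la v,f\ract h\ra_V$ one checks immediately that
$\beta_V((h_1\lact v)\tensor(f\ract h_2))(h)=\beta_V(v\tensor f)(h_2 h h_1)$, which matches the standard $H$-bimodule structure $(h_1\cdot\phi\cdot h_2)(h)=\phi(h_2 h h_1)$ on $H^*$. Invariance of $H^*_V$ under isomorphism then follows from functoriality of $\star$. The converse---simple $V,V'$ with $H^*_V=H^*_{V'}$ implying $V\cong V'$---is the main conceptual obstacle, since for general $V$ I cannot assume $\beta_V$ is injective. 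My plan is to fix $f\in V^\star$ and observe that $\lambda_f\colon v\mapsto\beta_V(v\tensor f)$ is a left $H$-module homomorphism $V\to H^*$ (the left $H$-action on $H^*$ being $(h\cdot\phi)(x)=\phi(xh)$). Simplicity of $V$ forces $\Im\lambda_f$ to be $0$ or isomorphic to $V$, so $H^*_V=\sum_{f\in V^\star}\Im\lambda_f$ is a sum of simple left $H$-submodules of $H^*$ all isomorphic to~$V$, hence $V$-isotypic. If $H^*_{V'}=H^*_V$ with $V'$ simple, then some $\lambda'_{f'}$ realizes $V'$ as a simple submodule of this $V$-isotypic sum, whence $V'\cong V$.

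For (b), the plan hinges on the natural identification $(V\oplus V')^\star=V^\star\oplus V'^\star$ with orthogonal pairing, which follows from additivity of~$\star$ and the embedding $V^\star\subseteq V^*$. A direct unfolding then gives
$\beta_{V\oplus V'}((v,v')\tensor(f,f'))(h)=\beta_V(v\tensor f)(h)+\beta_{V'}(v'\tensor f')(h)$,
so $H^*_{V\oplus V'}\subseteq H^*_V+H^*_{V'}$; the reverse inclusion is immediate by setting one component to zero. The assertion for $V^{\oplus n}$ follows by induction.

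For (c), evaluating $\beta_V(v\tensor f)$ at $1\in H$ produces $\la v,f\ra_V$, which is nonzero for some $v,f$ by the non-degeneracy assumption, so $\beta_V$ is a nonzero bimodule homomorphism; simplicity of $V\tensor_{D(V)}V^\star$ as a bimodule then forces $\ker\beta_V=0$. For (d), finite-dimensionality of $V$ gives $V^\star=V^*$ and makes $D=D(V)$ a division algebra by Schur's lemma. I would establish the Morita-type isomorphism $V\tensor_D V^*\cong\End_D V$ of $H$-bimodules via $v\tensor f\mapsto(u\mapsto f(u)\cdot v)$, where the bimodule structure on $\End_D V$ is induced from the $H$-action on $V$ on both sides. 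By the Jacobson density theorem (in its surjectivity form for finite-dimensional modules), the structure map $H\to\End_D V$ is surjective; since $\End_D V$ is a simple algebra, it is a simple bimodule over itself and hence over~$H$. Injectivity of $\beta_V$ in this case then follows from~(c). The two genuinely nontrivial points are the isotypic argument recovering $V$ from $H^*_V$ in~(a) and the appeal to Jacobson density in~(d).
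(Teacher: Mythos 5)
Your parts (a)--(c) are fine (the paper itself gives no proof of this proposition, deferring to \cite{Fos}*{\S3.1} and \cite{FD}); in particular the bimodule computation, the isotypic argument recovering a simple $V$ from $H^*_V$ via the maps $\lambda_f$, the orthogonality argument in (b), and the evaluation-at-$1_H$ argument in (c) are all correct.

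The gap is in (d), at exactly the point the statement is designed to address. The assignment $v\tensor f\mapsto(u\mapsto f(u)v)$ is an isomorphism of $H$-bimodules $V\tensor_\kk V^\star\to\End_\kk V$, but it does not descend to the balanced product $V\tensor_{D(V)}V^\star$ unless $\End_H V=\kk$. Write $E=\End_H V$, so $D(V)\cong E^{op}$, with $v\ract d=d(v)$ and $d\lact f=f\circ d$ for $d\in E$. The balancing relation identifies $d(v)\tensor f$ with $v\tensor(f\circ d)$, whereas your map sends these two elements to $u\mapsto f(u)\,d(v)$ and $u\mapsto f(d(u))\,v$, which differ in general; moreover the image $u\mapsto f(u)v$ is only $\kk$-linear, not $E$-linear, so it does not even land in $\End_E V$. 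The failure is unavoidable on dimension grounds: with $n=\dim_E V$ and $e=\dim_\kk E$ one has $\dim_\kk\bigl(V\tensor_{D(V)}V^\star\bigr)=n^2e=\dim_\kk\End_E V$, while $\dim_\kk\End_\kk V=n^2e^2$; since your map is injective on $V\tensor_\kk V^\star$, it cannot factor through the proper quotient $V\tensor_{D(V)}V^\star$ when $e>1$. Over an arbitrary field (the paper works over $\QQ(q)$ and other non-closed fields) Schur's lemma only yields a division algebra $E$, and this is precisely the case the tensor product over $D(V)$ is meant to handle; as written, your argument proves (d) only when $E=\kk$.

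The conclusion is true, and your two tools (Schur plus Jacobson density) suffice once rearranged. For example: by density the image of $H$ in $\End_\kk V$ is $S=\End_E V$, and $V$ (resp.\ $V^\star$) is simple as a left (resp.\ right) $S$-module. Given a nonzero sub-bimodule $M\subseteq V\tensor_{D(V)}V^\star$, choose $0\neq x=\sum_{i=1}^m v_i\tensor f_i\in M$ with $m$ minimal; then the $v_i$ are independent over $E$, so density provides $h\in H$ with $h\lact v_i=\delta_{i,1}v_1$, whence $v_1\tensor f_1\in M$, and this element is nonzero because $V$ and $V^\star$ are free over the division algebra. Since $f_1\ract H=V^\star$ ($V^\star$ is a simple right $H$-module as $V$ is simple finite dimensional) and $H\lact v_1=V$, one gets $M=V\tensor_{D(V)}V^\star$. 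Alternatively one can repair the isomorphism itself by replacing $V^\star$ with the $D(V)$-linear dual $\Hom_{D(V)}(V,D(V))$, for which $v\tensor\phi\mapsto(u\mapsto v\ract\phi(u))$ is well defined and gives $\End_E V$, and then identifying $\Hom_{D(V)}(V,D(V))\cong V^\star$ by composing with a nonzero tracial $\kk$-linear functional on $E$; but that needs extra input, so the direct generation argument is the cleaner fix. With either repair, your deduction of injectivity from (c) stands.
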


It is natural to call $H^*_V$ a {\em generalized Peter-Weyl component}. Denote $H^*_{\mathscr C}=\sum_{[V]\in\Iso\mathscr C} H^*_V$ and 
$\ul H^*_{\mathscr C}=\bigoplus_{[V]\in\Iso^\circ\mathscr C} H^*_V$,
where $\Iso\mathscr C$ (respectively, $\Iso^\circ\mathscr C$) is the set of isomorphism classes of objects (respectively, simple
objects) in~$\mathscr C$. By definition there is a natural homomorphism of $H$-bimodules $\ul H^*_{\mathscr C}\to H^*_{\mathscr C}$.
Clearly, under the assumptions of Proposition~\ref{prop:Fos-0}\eqref{prop:Fos-0.c} it is injective.
Note that $H^*_{\mathscr C}=\sum_{[V]\in A} H^*_V$ for any 
subset $A$ of~$\Iso\mathscr C$ which generates it as an additive monoid.
%if every object in~$\mathscr C$ is 
% a direct sum of indecomposables (e.g. if each object in~$\mathscr C$ has finite length) then $H^*_{\mathscr C}=\sum_{[V]\in\Ind\mathscr C}H^*_V$
% where $\Ind\mathscr C\subset \Iso\mathscr C$ is the set of isomorphism classes of indecomposables in~$\mathscr C$. 
The following refinement of~\cite{Fos}*{Theorem~3.10} establishes the generalized Peter-Weyl decomposition.
\begin{theorem}\label{thm:semi-simp-new}
Suppose that all objects in~$\mathscr C$ have finite length. Then 
\begin{enumerate}[{\rm(a)}]
 \item if 
$H^*_{\mathscr C}=\ul H^*_{\mathscr C}$ then 
$\mathscr C$ is semisimple;
\item if $\mathscr C$ is semisimple and $V\tensor_{D(V)}V^\star$ is simple for every $V\in\mathscr C$ simple then $H^*_{\mathscr C}=\ul H^*_{\mathscr C}$.
\end{enumerate}
\end{theorem}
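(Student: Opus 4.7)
For part (b), the plan is to deduce the claim formally from the earlier results. Under the semisimplicity of $\mathscr C$, every object decomposes as a direct sum of simples, so iterating Proposition~\ref{prop:Fos-0}\eqref{prop:Fos-0.b} gives $H^*_{\mathscr C}=\sum_{[W]\in\Iso^\circ\mathscr C} H^*_W$. The simplicity hypothesis combined with Proposition~\ref{prop:Fos-0}\eqref{prop:Fos-0.c} makes $\beta_W$ an isomorphism $W\tensor_{D(W)} W^\star\xrightarrow{\sim} H^*_W$ for every simple $W$, so each $H^*_W$ is simple as an $H$-bimodule. By Proposition~\ref{prop:Fos-0}\eqref{prop:Fos-0.a}, $W\not\cong W'$ forces $H^*_W\neq H^*_{W'}$; since two distinct simple sub-bimodules meet trivially, the sum $\sum_W H^*_W$ is direct, giving $H^*_{\mathscr C}=\ul H^*_{\mathscr C}$.

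Part (a) will be proved by contrapositive. Supposing $\mathscr C$ is not semisimple, the finite length assumption supplies a non-split short exact sequence $0\to U\to V\to W\to 0$ in $\mathscr C$ with $U,W$ simple. The first step is to verify the inclusions $H^*_U,H^*_W\subseteq H^*_V$: exactness of the duality functor lets me lift any $\phi\in U^\star$ to $\tilde\phi\in V^\star$, and the identity $\beta_V(u\tensor\tilde\phi)(h)=\lra{h\lact u}{\phi}=\beta_U(u\tensor\phi)(h)$ (valid because $h\lact u\in U$) yields $H^*_U\subseteq H^*_V$; the second inclusion follows from the natural embedding $W^\star\hookrightarrow V^\star$ as the annihilator of $U$. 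The technical heart is then a \emph{splitting lemma}: $H^*_V=H^*_U+H^*_W$ if and only if the extension splits. The ``if'' direction is Proposition~\ref{prop:Fos-0}\eqref{prop:Fos-0.b}; for ``only if'', the plan is to use the hypothesized equality to decompose a generic matrix coefficient $\beta_V(v\tensor f)$ into a $U$-part and a $W$-part, and after stripping these to read off an $H$-equivariant section $W\to V$ from the residue.

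I also need a \emph{density lemma}: if $Y\in\mathscr C$ is simple and $H^*_Y\subseteq H^*_V$, then $Y$ is a composition factor of $V$. Every $\beta_V(v\tensor f)$ vanishes on $\Ann_H V$, hence so does every element of $H^*_Y$; the non-degeneracy of the evaluation pairing then forces $\Ann_H V\subseteq\Ann_H Y$, so $Y$ descends to a module over the finite-dimensional artinian quotient $H/\Ann_H V$, whose simples coincide with the composition factors of the faithful finite-length module $V$. The contradiction then assembles as follows: under the hypothesis of (a), $H^*_{\mathscr C}$ is a semisimple $H$-bimodule, so $H^*_V$ decomposes as $\bigoplus_{Y\in S_V} H^*_Y$ for a set $S_V$ of simples; the splitting lemma together with non-splitness forces $H^*_V\supsetneq H^*_U+H^*_W$, whence $S_V$ contains some $Y_0\notin\{U,W\}$, while the density lemma restricts $S_V$ to composition factors of $V$, contradicting that these are only $U$ and $W$.

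The principal obstacle is the ``only if'' direction of the splitting lemma: one must recover the geometric datum of an $H$-equivariant section from a purely algebraic equality of matrix-coefficient subspaces, genuinely using both the bimodule structure on $H^*$ and the form of $\beta_V$. The density lemma is a standard argument for finite-length faithful modules over artinian quotients, and all other steps rely only on Proposition~\ref{prop:Fos-0}.
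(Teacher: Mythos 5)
Your outline leaves the actual content of part (a) unproved, and the scaffolding around it does not hold together. The ``only if'' half of your splitting lemma --- producing a splitting (or, equivalently, a matrix coefficient of $V$ outside $H^*_U+H^*_W$) from non-splitness --- is precisely the heart of the theorem, and you only describe a hope (``strip the $U$- and $W$-parts and read off a section from the residue''), not an argument. The surrounding assembly is also flawed: in part (a) there is \emph{no} hypothesis that $Y\tensor_{D(Y)}Y^\star$ is simple, so you may not assert that $H^*_{\mathscr C}$ is a semisimple $H$-bimodule; and even if it were, a sub-bimodule of $\bigoplus_Y H^*_Y$ need not equal a partial sum $\bigoplus_{Y\in S_V}H^*_Y$ (think of a diagonal copy), so ``$H^*_V=\bigoplus_{Y\in S_V}H^*_Y$'' is unjustified. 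Your density lemma is proved only under an assumption the theorem does not make: objects of $\mathscr C$ have finite length but need not be finite dimensional (the motivating example is integrable $U_q(\gg)$-modules for Kac--Moody $\gg$), so $H/\Ann_H V$ is not an artinian quotient and its simple modules need not be composition factors of $V$ (a primitive ring with a faithful simple module has many other simples). Finally, your inclusion $H^*_U\subseteq H^*_V$ uses an exactness property of ${}^\star$ (every $\phi\in U^\star$ extends to $V^\star$) which is not among the paper's stated hypotheses. Note also that the paper itself gives no in-text proof of this theorem; it defers to \cite{Fos}, Theorem~3.10, so the comparison here is on the merits of your argument alone.

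All of these gaps are repaired by one observation you never use: the \emph{left} $H$-module structure. Since $\beta_Y$ is a bimodule map (Proposition~\ref{prop:Fos-0}), $H^*_Y$ is a quotient of $Y\tensor_\kk Y^\star$, i.e.\ of a direct sum of copies of $Y$, hence is a $Y$-isotypic semisimple left $H$-module; consequently $\sum_{[Y]\in\Iso^\circ\mathscr C}H^*_Y$ is a semisimple left module. For (a), take a non-split $0\to U\to V\to W\to 0$ with $U,W$ simple (so $U$ is the unique proper nonzero submodule of $V$), choose $f_0\in V^\star$ with $\la U,f_0\ra\ne 0$ by non-degeneracy, and consider the left-module map $V\to H^*_V$, $v\mapsto\beta_V(v\tensor f_0)$: its kernel is a submodule of $V$ which is neither $V$ nor contains $U$ (evaluate at $h=1$), hence is zero. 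Thus $V$ embeds into $H^*_{\mathscr C}=\ul H^*_{\mathscr C}$, a semisimple left module, forcing $V$ semisimple --- contradiction. This proves (a) directly (and your splitting lemma as a byproduct), with no density lemma, no extension of functionals, and no bimodule semisimplicity. The same observation fixes the one real gap in your part (b): ``distinct simple sub-bimodules meet pairwise trivially'' does not make a sum of more than two of them direct (three distinct lines in a plane meet pairwise trivially); what you need is that the $H^*_W$ are pairwise \emph{non-isomorphic}, which follows because $H^*_W$ is $W$-isotypic as a left module, and then directness is the standard isotypic-component argument. The surjectivity half of (b), by iterating Proposition~\ref{prop:Fos-0}, is fine.
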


\subsection{}
Henceforth we denote by $\mathscr C^{fin}$ the full subcategory of~$\mathscr C$ consisting of all finite dimensional objects.
Clearly $V\tensor V^\star$, $V\in\mathscr C^{fin}$,
is a unital algebra with the unity~$1_V$; set
$z_V:=\beta_V(1_V)\in H^*_V$. For example, if $H=\kk G$ for 
a finite group $G$ then for any finite dimensional $H$-module~$V$ we have  
$z_V(g)=tr_V(g)$, $g\in G$ where 
$tr_V$ denotes the trace of a linear endomorphism of~$V$.

Given an $H$-bimodule~$B$, define the subspace $B^H$ of $H$-invariants in~$B$ by 
$B^H=\{ b\in B\,:\, h\lact b=b\ract h,\,\forall\, h\in H\}$ ($B^H$ is sometimes referred to as the center of~$B$). 
Clearly, $z_V\in (H^*_V)^H$, $z_V(1_H)=\dim_\kk V\not=0$ and $(H^*_V)^H=\kk z_V$ if $\End_H V=\kk\id_V$. 
Set $\mathcal Z_{\mathscr C}=\sum_{[V]\in\Iso\mathscr C} \ZZ z_V$. Given~$V\in\mathscr C$,
denote $|V|$ its image in the Grothendieck group $K_0(\mathscr C)$ of~$\mathscr C$. The following result contrasts sharply with 
Proposition~\ref{prop:Fos-0} and Theorem~\ref{thm:semi-simp-new} for non-semisimple $\mathscr C$.
\begin{theorem}\label{thm:char}
Suppose that $\mathscr C=\mathscr C^{fin}$. Then
%\begin{enumerate}[{\rm(a)}]
% \item\label{thm:char.a} $z_{U}=z_V+z_{U/V}$ for all $V\subset U$ in~$\mathscr C$. In particular, $\mathcal Z_{\mathscr C}=
% \bigoplus_{[V]\in\Iso^\circ\mathscr C} \ZZ z_V$.
%\item\label{thm:char.b} 
the map $K_0(\mathscr C)\to \mathcal Z_{\mathscr C}$ given by
$|V|\mapsto z_V$, $[V]\in\Iso\mathscr C$ is an isomorphism  
of abelian groups.
%\end{enumerate}
\end{theorem}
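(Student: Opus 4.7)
My plan is to reduce the theorem to a single clean statement — that the trace characters $z_S$ of pairwise non-isomorphic simple finite-dimensional objects are $\mathbb Z$-linearly independent in $H^*$ — and to extract the latter from the simple-bimodule structure established in Proposition~\ref{prop:Fos-0}. First I would identify $z_V$ with the ordinary trace character: for $V\in\mathscr C^{fin}$ the unit $1_V\in V\tensor V^\star$ equals $\sum_i e_i\tensor e_i^\star$ with respect to a basis $\{e_i\}$ of~$V$ and its dual basis $\{e_i^\star\}\subset V^\star=V^*$, and the definition of $\beta_V$ gives $z_V(h)=\sum_i\la h\lact e_i,e_i^\star\ra=\operatorname{tr}_V(h)$, generalising the group-algebra example already noted. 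Additivity on a short exact sequence $0\to V'\to V\to V''\to 0$ in $\mathscr C^{fin}$ then follows by picking a basis of~$V$ adapted to~$V'$ and reading off that $h$ acts by block upper-triangular matrices, whence $z_V=z_{V'}+z_{V''}$. This shows that $|V|\mapsto z_V$ descends to a well-defined homomorphism $\varphi\colon K_0(\mathscr C)\to\mathcal Z_{\mathscr C}$ of abelian groups, and surjectivity is immediate from the definition of $\mathcal Z_{\mathscr C}$.

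The real content is injectivity. Since $K_0(\mathscr C)$ is free abelian on $\Iso^\circ\mathscr C$ and additivity reduces $\varphi(|V|)$ to $\sum_i[V:S_i]z_{S_i}$ over composition factors, injectivity amounts to $\mathbb Z$-linear independence of $\{z_S:[S]\in\Iso^\circ\mathscr C\}$ in~$H^*$. Here I would invoke Proposition~\ref{prop:Fos-0}: part~\eqref{prop:Fos-0.d} gives that $S\tensor_{D(S)}S^\star$ is simple as an $H$-bimodule and that $\beta_S$ is injective, so $H^*_S\subseteq H^*$ is itself a simple sub-bimodule and $z_S=\beta_S(1_S)\neq 0$; part~\eqref{prop:Fos-0.a} gives that $S\not\cong S'$ forces $H^*_S\neq H^*_{S'}$, hence $H^*_S\not\cong H^*_{S'}$ as simple sub-bimodules. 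The standard fact that a sum of pairwise non-isomorphic simple sub-modules of any module is internally direct then yields $\sum_{[S]}H^*_S=\bigoplus_{[S]}H^*_S$ in $H^*$, so any relation $\sum_i n_iz_{S_i}=0$ must hold componentwise. Since each $z_{S_i}$ is a nonzero vector in the $\kk$-vector space $H^*$ this forces $n_i=0$, completing the proof of injectivity.

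The argument is essentially formal once Proposition~\ref{prop:Fos-0}\eqref{prop:Fos-0.a},\eqref{prop:Fos-0.d} is in hand; the main obstacle is really no more than verifying the direct-sum assertion for the $H^*_S$, which is routine. What is noteworthy, and what sets Theorem~\ref{thm:char} in sharp contrast to Theorem~\ref{thm:semi-simp-new}, is that no semisimplicity assumption on $\mathscr C$ enters the argument: the characters $z_V$ only see the Jordan--Hölder data of objects in $\mathscr C^{fin}$, and therefore $K_0(\mathscr C)$ parametrises $\mathcal Z_{\mathscr C}$ regardless of how non-semisimple $\mathscr C$ happens to be.
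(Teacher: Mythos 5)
Your proposal is correct in substance and follows essentially the same route as the paper: your identification $z_V(h)=\operatorname{tr}_V(h)$ together with the adapted-basis, block-upper-triangular computation of the trace on a short exact sequence is exactly what Lemmas~\ref{lem:A} and~\ref{lem:B'} accomplish via the twisted sum $V\oplus_\rho W$, and injectivity is reduced, as in the paper, to the ($\kk$-)linear independence of the characters $z_S$ of the simple objects, which the paper extracts from Proposition~\ref{prop:Fos-0}\eqref{prop:Fos-0.d}.

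One justification in your independence step is not valid as written: from $H^*_S\neq H^*_{S'}$ (Proposition~\ref{prop:Fos-0}\eqref{prop:Fos-0.a}) you infer $H^*_S\not\cong H^*_{S'}$ as $H$-bimodules. Distinct simple submodules of a module can perfectly well be isomorphic, and a sum of pairwise \emph{distinct} simple submodules need not be direct --- it is the pairwise \emph{non-isomorphic} ones that always sum directly, which is the fact you invoke. The non-isomorphism you need is nevertheless true and is one line away: by Proposition~\ref{prop:Fos-0}\eqref{prop:Fos-0.d} we have $H^*_S\cong S\tensor_{D(S)}S^\star$, and for each $f\in S^\star$ the assignment $v\mapsto \beta_S(v\tensor f)$ is a homomorphism of left $H$-modules $S\to H^*$, so $H^*_S$ is, as a left $H$-module, a sum of copies of~$S$; hence any bimodule (indeed left-module) isomorphism $H^*_S\cong H^*_{S'}$ forces $S\cong S'$. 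With this patch your direct-sum argument, and with it the injectivity, goes through. Two further small points: $z_S\neq 0$ is most directly seen from $z_S(1_H)=\dim_\kk S$ (with the same implicit assumption on $\operatorname{char}\kk$ as in the paper) rather than from injectivity of $\beta_S$; and the final step ``$n_i z_{S_i}=0$ forces $n_i=0$'' uses this same assumption, exactly as the paper's argument does.
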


\subsection{}
To introduce a multiplication on~$\mathcal Z_{\mathscr C}\subset (H^*_{\mathscr C})^H\subset H^*_{\mathscr C}$, 
we assume henceforth that $H=(H,m,\Delta,\varepsilon)$ is a bialgebra and that $\mathscr C$ is a tensor subcategory of $H-\Mod$. 
Note that $H^*$ is an algebra in a natural way.
It is easy to see (Lemma~\ref{lem:inv-subalg}) that $(H^*)^H$ is a subalgebra of~$H^*$.
We also assume that there is a natural isomorphism $(V\tensor V')^\star\cong V'{}^\star\tensor V^\star$ in~$\mcat-H$ 
for all $V,V'\in\mathscr C$.
\begin{theorem}
\begin{enumerate}[{\rm(a)}]
\item\label{thm:main-1.a} $H^*_V\cdot H^*_{V'}=H^*_{V\tensor V'}$ for all $V,V'\in\mathscr C$. In particular, $H^*_{\mathscr C}$ is a subalgebra of~$H^*$;
\item\label{thm:main-1.b} $z_V\cdot z_{V'}=z_{V\tensor V'}$ for all $V,V'\in\mathscr C^{fin}$. In particular, if 
$\mathscr C=\mathscr C^{fin}$ then $\mathcal Z_{\mathscr C}$ is a subring of~$(H_{\mathscr C}^*)^H$
and the map $K_0(\mathscr C)\to \mathcal Z_{\mathscr C}$ from Theorem~\ref{thm:char} %\eqref{thm:char.b} 
is an isomorphism of rings.
\end{enumerate}
\label{thm:main-1}
\end{theorem}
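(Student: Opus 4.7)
The approach is to reduce both parts to a single bilinear identity on pure tensors comparing the bialgebra product of Peter-Weyl forms with the Peter-Weyl form of a tensor product.

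First I would record the algebra structure on~$H^*$ dual to~$\Delta$, namely $(\phi\cdot\psi)(h)=\sum_{(h)}\phi(h_{(1)})\psi(h_{(2)})$ in Sweedler notation, and then verify
$$
\beta_V(v\tensor f)\cdot\beta_{V'}(v'\tensor f')=\beta_{V\tensor V'}\bigl((v\tensor v')\tensor(f'\tensor f)\bigr)
$$
for $v\in V,\,v'\in V',\,f\in V^\star,\,f'\in V'{}^\star$, where the right-hand side is read through the assumed natural isomorphism $(V\tensor V')^\star\cong V'{}^\star\tensor V^\star$. Unpacking, both sides evaluated at $h\in H$ reduce to $\sum_{(h)}\la h_{(1)}\lact v,f\ra_V\la h_{(2)}\lact v',f'\ra_{V'}$, once one uses the compatibility of the duality isomorphism with evaluation, $\la v\tensor v',f'\tensor f\ra_{V\tensor V'}=\la v,f\ra_V\la v',f'\ra_{V'}$, which is forced by naturality together with the bialgebra action $h\lact(v\tensor v')=\sum_{(h)}(h_{(1)}\lact v)\tensor(h_{(2)}\lact v')$.

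From this identity, part~(a) is immediate: every element of $(V\tensor V')\tensor_{D(V\tensor V')}(V\tensor V')^\star$ is a linear combination of pure tensors of the form $(v\tensor v')\tensor(f'\tensor f)$, each sent by~$\beta_{V\tensor V'}$ to a product in~$H^*_V\cdot H^*_{V'}$, and conversely any product of $\beta_V$- and $\beta_{V'}$-values arises this way. The ``in particular'' statement follows since $\mathscr C$ is closed under~$\tensor$, so summing over isomorphism classes shows $H^*_{\mathscr C}$ is closed under multiplication.

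For part~(b), I would apply the identity to the unit. In dual bases one has $1_V=\sum_i v_i\tensor v^i$, and the duality isomorphism identifies $1_{V\tensor V'}$ with $\sum_{i,j}(v_i\tensor v'_j)\tensor(v'{}^j\tensor v^i)$; termwise application of the pure-tensor identity then yields $z_{V\tensor V'}=z_V\cdot z_{V'}$. Equivalently, one recognises $z_V(h)$ as the trace of~$h$ acting on~$V$ and invokes multiplicativity of traces under~$\Delta$. Combined with Theorem~\ref{thm:char} and the fact that $|V|\cdot|V'|=|V\tensor V'|$ in $K_0(\mathscr C)$, this upgrades the additive isomorphism $|V|\mapsto z_V$ to a ring isomorphism. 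The only real obstacle is notational bookkeeping: one must pin down the order-of-factors conventions in the duality isomorphism, in the evaluation pairings, and in the coproduct dual so that they all match; once that is done consistently, the argument is essentially a single application of Sweedler's formula.
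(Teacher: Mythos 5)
Your proposal is correct and follows essentially the same route as the paper: your pure-tensor identity is exactly the paper's relation $\tilde\beta_V\tensor\tilde\beta_{V'}=\tilde\beta_{V\tensor V'}\circ\kappa$ with $\kappa(v\tensor f\tensor v'\tensor f')=v\tensor v'\tensor f'\tensor f$, and your dual-basis computation of the unit is the paper's observation $1_{V\tensor V'}=\kappa(1_V\tensor 1_{V'})$, from which (a) and (b) follow just as you argue.
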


\noindent
Thus, it is natural to regard $\mathcal Z_{\mathscr C}$ as the character ring of~$\mathscr C$.

\subsection{}
It turns out that we can transfer the above structures from~$H^*_{\mathscr C}$ to~$H$ if 
$H=(H,m,\Delta,\varepsilon,S)$ is a Hopf algebra.
For an $H$-bimodule $B$ define left actions $\ad$ and~$\diamond$ on~$B$ via 
$(\ad h)(b)=h_{(1)}\lact b\ract S(h_{(2)})$ and $h\diamond b=S^2(h_{(2)})\lact b\ract S(h_{(1)})$, $h\in H$,
$b\in B$, where $\Delta(b)=b_{(1)}\tensor b_{(2)}$ in Sweedler's notation.

Fix a categorical completion $H\widehat\tensor H$ such that $(f\tensor 1)(H\widehat\tensor H)\subset H$ for all $f\in H^*_{\mathscr C}$.
Equivalently, $\Phi_P:H^*_{\mathscr C}\to H$, $f\mapsto (f\tensor 1)(P)$ is 
a well-defined linear map.
Denote $\mathscr A(H)$ the set 
of all $P\in H\widehat\tensor H$ such that $P\cdot (S^2\tensor 1)(\Delta(h))=\Delta(h)\cdot P$ for all $h\in H$. 
Clearly, $\mathscr A(H)$ is a subalgebra of~$H\widehat\tensor H$. Elements of~$\mathscr A(H)$ are analogous to $M$-matrices
(see e.g.~\cite{Sh}).
For $V\in C^{fin}$, set $c_V=c_{V,P}:=\Phi_P(z_V)\in \Phi_P((H^*_{\mathscr C})^H)$. Let $Z(H)$ be the center of~$H$.
\begin{theorem}\label{thm:main-2}
Let~$P\in\mathscr A(H)$. Then 
$\Phi_P:H^*_{\mathscr C}\to H$ is a homomorphism of left $H$-modules, where $H$ acts on~$H^*_{\mathscr C}$ and~$H$ via $\diamond$ and~$\ad$, respectively.
Moreover, $\Phi_P((H^*_{\mathscr C})^H)\subset Z(H)$
and the assignment
$|V|\mapsto c_V$, $[V]\in\Iso\mathscr C^{fin}$ defines
a homomorphism of abelian groups $\ch_{\mathscr C}:K_0(\mathscr C^{fin})\to Z(H)$.
\end{theorem}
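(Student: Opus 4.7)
My plan is to establish the three assertions in sequence. For the first, equivariance, I would prove $\Phi_P(h\diamond f)=(\ad h)(\Phi_P(f))$ for all $h\in H$, $f\in H^*_{\mathscr C}$, by rewriting both sides as pairings against~$P$. With the natural bimodule structure $(h\lact f\ract k)(x)=f(kxh)$ on~$H^*$ one has $(h\diamond f)(x)=f(S(h_{(1)})\,x\,S^2(h_{(2)}))$, whence
\begin{gather*}
\Phi_P(h\diamond f)=(f\otimes 1)\bigl((S(h_{(1)})\otimes 1)\,P\,(S^2(h_{(2)})\otimes 1)\bigr),\\
(\ad h)(\Phi_P(f))=(f\otimes 1)\bigl((1\otimes h_{(1)})\,P\,(1\otimes S(h_{(2)}))\bigr).
\end{gather*}
It thus suffices to verify the identity
\[
(S(h_{(1)})\otimes 1)\,P\,(S^2(h_{(2)})\otimes 1)=(1\otimes h_{(1)})\,P\,(1\otimes S(h_{(2)}))
\]
in $H\widehat\tensor H$. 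I would derive it by applying the $M$-matrix relation $\Delta(k)\,P=P\cdot(S^2(k_{(1)})\otimes k_{(2)})$ with $k=h_{(2)}$ from the $4$-fold coproduct $\Delta^3(h)=\sum h_{(1)}\otimes h_{(2)}\otimes h_{(3)}\otimes h_{(4)}$ and then sandwiching the resulting equation by $(S(h_{(1)})\otimes 1)$ on the left and $(1\otimes S(h_{(4)}))$ on the right. The antipode identity $\sum h_{(3)}S(h_{(4)})=\varepsilon(h_{(3)})$ collapses the second slot on one side and $\sum S(h_{(1)})h_{(2)}=\varepsilon(h_{(1)})$ collapses the first slot on the other; the counit axiom yields exactly the desired identity.

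For the second step, centrality, the condition $f\in(H^*_{\mathscr C})^H\subseteq(H^*)^H$ unfolds to $f(xh)=f(hx)$ for all $x,h\in H$, so such~$f$ is trace-like on~$H$. Hence
\[
(h\diamond f)(x)=f(S(h_{(1)})\,x\,S^2(h_{(2)}))=f(x\,S^2(h_{(2)})S(h_{(1)}))=\varepsilon(h)f(x),
\]
using $S^2(h_{(2)})S(h_{(1)})=S(h_{(1)}S(h_{(2)}))=\varepsilon(h)\cdot 1_H$. Equivariance then gives $\sum h_{(1)}\Phi_P(f)S(h_{(2)})=\varepsilon(h)\,\Phi_P(f)$. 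Substituting $h\mapsto h_{(1)}$, multiplying on the right by $h_{(2)}$, and collapsing with $\sum S(h_{(2)})h_{(3)}=\varepsilon(h_{(2)})$ and the counit axiom reduces this to $h\,\Phi_P(f)=\Phi_P(f)\,h$, so $\Phi_P(f)\in Z(H)$.

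The last assertion is then essentially formal: Theorem~\ref{thm:char} applied to~$\mathscr C^{fin}$ provides a group homomorphism $K_0(\mathscr C^{fin})\to\mathcal Z_{\mathscr C^{fin}}$, $|V|\mapsto z_V$, and combining with $z_V\in(H^*_V)^H\subseteq(H^*_{\mathscr C})^H$, the linearity of~$\Phi_P$ and the centrality step, one obtains the desired $\ch_{\mathscr C}:K_0(\mathscr C^{fin})\to Z(H)$, $|V|\mapsto c_V$. The chief obstacle is the sandwich manipulation in the first step: the $M$-matrix relation is a single equation phrased via~$\Delta(h)$, yet the target identity requires distinct two-sided conjugations on the two tensor slots, forcing one to pass to~$\Delta^3(h)$ and orchestrate the antipode collapses carefully. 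The rest is routine Hopf algebra bookkeeping.
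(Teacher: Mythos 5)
Your argument is correct and follows essentially the same route as the paper: your sandwich identity $(S(h_{(1)})\otimes 1)P(S^2(h_{(2)})\otimes 1)=(1\otimes h_{(1)})P(1\otimes S(h_{(2)}))$ is exactly Lemma~\ref{lem:D}\eqref{lem:D.d}, derived by the same $\Delta^3$/antipode collapse, and your centrality step reproduces Lemmas~\ref{lem:E} and~\ref{lem:F} (invariant functionals are trace-like hence $\diamond$-invariant; $\ad$-invariant elements are central). The final $K_0$ assertion via Theorem~\ref{thm:char} and $z_V\in(H^*_V)^H$ is likewise how the paper intends it.
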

Surprisingly, $\Phi_P$ is often close to be an algebra homomorphism. To make this more precise,
we generalize the notion of an algebra 
homomorphism as follows. Let $A$, $B$ be $\kk$-algebras and let $\mathscr F$ be a collection of subspaces in~$A$. 
We say that a $\kk$-linear map $\Phi:A\to B$ is a {\em $\mathscr F$-homomorphism} if 
$\Phi(U)\cdot \Phi(U')\subset \Phi(U\cdot U')$ for all $U,U'\in\mathscr F$. We say that $\mathscr F$ is multiplicative 
if $U\cdot U'\in\mathscr F$ for all $U,U'\in\mathscr F$. It is easy to see that 
$|\mathscr F|:=\sum_{U\in\mathscr F} U$ is a subalgebra of~$A$ and $\Phi(|\mathscr F|)$ is a subalgebra of~$B$
for any multiplicative family $\mathscr F$.

In what follows we denote $\mathscr F_{\mathscr C}$ the collection of all subspaces of~$H^*$ of the form $H^*_V$ where 
$V\in\mathscr C$. By Theorem~\ref{thm:main-1}, $\mathscr F_{\mathscr C}$ is multiplicative.
\begin{example}
Let $H=\kk G$ where $G$ is a finite group and $\mathscr C$ be the category of its finite dimensional representations. Then the assignment $\delta_g\mapsto g^{-1}$ where $\delta_g(h)=\delta_{g,h}$, $g,h\in G$
defines an isomorphism of $H$-bimodules $\Phi:H^*\to H$. Let $\mathscr F_G=\{ H^*_V\,:\, [V]\in\Iso^\circ\mathscr C,\, \Hom_G(V,V\tensor V)\not=0\}\subset \mathscr F_{\mathscr C}$.
If $|G|\in\kk^\times$ then $\Phi$ is an $\mathscr F_G$-homomorphism
since $\Phi(H^*_V)\cdot \Phi(H^*_{V'})=0$ if $[V]\not=[V']\in\Iso^\circ\mathscr C$ and $\Phi(H^*_V)\cdot \Phi(H^*_V)=\Phi(H^*_V)$.
\end{example}

Denote by $\mathscr M(H)$ the set of all $P\in H\widehat\tensor H$ such that $\Phi_P$ is an $\mathscr F_{\mathscr C}$-homomorphism 
and by $\mathscr M_0(H)$ the set of all $P\in\mathscr M(H)$ such that $\Phi_P$ restricts to a homomorphism of algebras $(H^*_{\mathscr C})^H\to Z(H)$.
We abbreviate $H_{V,P}:=\Phi_P(H^*_V)$ and $H_{\mathscr C,P}:=\Phi_P(H^*_{\mathscr C})=\sum_{[V]\in\Iso\mathscr C} H_{V,P}$.
Since $\mathscr F_{\mathscr C}$ is multiplicative, $H_{\mathscr C,P}$ is a subalgebra of~$H$ for~$P\in\mathscr M(H)$.
The following is immediate.
%$(\Delta\tensor 1)(P)=(m\tensor m\tensor 1)((T\tensor 1) P_{15}P_{35})$ for some $T\in H\widehat\tensor H\widehat\tensor H
%\widehat\tensor H$ and let $\mathscr M_0(H)$ be the set of $P\in\mathscr M(H)$ with $T$ satisfying
%$(m^{op}\tensor m^{op})(T)=1\tensor 1$. 
%Surprisingly, if $P\in \mathscr A(H)\cap \mathscr M(H)$ then $\Phi_P$ is almost a homomorphism of algebras.
\begin{proposition}\label{cor:main-cor}
%It turns out that the assumption that $P\in\mathscr M(H)$ 
Suppose that $P\in\mathscr A(H)\cap\mathscr M(H)$ and $\Phi_P$ is injective. Then:
\begin{enumerate}[{\rm(a)}]
\item\label{cor:main-cor.a'} If $V\tensor_{D(V)}V^\star$ is a simple $H$-bimodule then it is isomorphic to $H_{V,P}$ as a left $H$-module;
\item\label{cor:main-cor.a} $H_{\mathscr C,P}=\bigoplus_{[V]\in\Iso^\circ\mathscr C} H_{V,P}$ if $\mathscr C$ is semisimple and $V\tensor_{D(V)}V^\star$
is simple as an $H$-bimodule for each $V\in\mathscr C$ simple;
 \item\label{cor:main-cor.b} If $P\in\mathscr M_0(H)$ then 
 $\ch_{\mathscr C}:K_0(\mathscr C^{fin})\to Z(H)$ is injective.
 \end{enumerate}
\end{proposition}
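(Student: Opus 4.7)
My approach is to observe that each of the three parts reduces quickly to an earlier result once the injectivity of $\Phi_P$ is used to transport structure from $H^*_{\mathscr C}$ to $H$. I would proceed in three short steps, one per item.

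For part~\eqref{cor:main-cor.a'}, the first step is to invoke Proposition~\ref{prop:Fos-0}\eqref{prop:Fos-0.c}: under the simplicity hypothesis on $V\tensor_{D(V)}V^\star$, the map $\beta_V$ is injective, and by the very definition of $H^*_V$ it is surjective onto $H^*_V$, so it is an isomorphism of $H$-bimodules. Next, the restriction of $\Phi_P$ to $H^*_V$ is surjective onto $H_{V,P}$ by construction and injective by the hypothesis on $\Phi_P$, hence also an isomorphism. Theorem~\ref{thm:main-2} identifies $\Phi_P$ as a homomorphism of left $H$-modules (with the $\diamond$ action on the source and the $\ad$ action on the target), so the composition $\Phi_P\circ\beta_V$ furnishes the required left $H$-module isomorphism.

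For part~\eqref{cor:main-cor.a}, the hypotheses match exactly those of Theorem~\ref{thm:semi-simp-new}(b), giving the internal direct sum
$$
H^*_{\mathscr C}=\ul H^*_{\mathscr C}=\bigoplus_{[V]\in\Iso^\circ\mathscr C}H^*_V.
$$
Applying the injective linear map $\Phi_P$ preserves directness, so $H_{\mathscr C,P}=\bigoplus_{[V]\in\Iso^\circ\mathscr C}H_{V,P}$.

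For part~\eqref{cor:main-cor.b}, the plan is to factor $\ch_{\mathscr C}$ as the abelian-group isomorphism $K_0(\mathscr C^{fin})\xrightarrow{\sim}\mathcal Z_{\mathscr C}$, $|V|\mapsto z_V$, supplied by Theorem~\ref{thm:char}, followed by the restriction of $\Phi_P$ to $\mathcal Z_{\mathscr C}\subset(H^*_{\mathscr C})^H$. Since $\Phi_P$ is injective, so is this restriction, and therefore so is $\ch_{\mathscr C}$. The additional assumption $P\in\mathscr M_0(H)$, together with Theorem~\ref{thm:main-1}\eqref{thm:main-1.b}, ensures that this restriction is moreover a ring homomorphism, upgrading $\ch_{\mathscr C}$ to an injective ring homomorphism. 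I do not anticipate any serious obstacle, and indeed the paper calls the statement immediate; the only subtle point is bookkeeping the twisted left-module structures in part~\eqref{cor:main-cor.a'}, and that is forced by the fact that $\beta_V$ is a bimodule map (so it automatically intertwines $\diamond$) combined with the intertwining property of $\Phi_P$ from Theorem~\ref{thm:main-2}.
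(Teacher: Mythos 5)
Your proof is correct and is precisely the chain of reductions the paper has in mind when it calls the proposition immediate: Proposition~\ref{prop:Fos-0}\eqref{prop:Fos-0.c} together with the intertwining property of $\Phi_P$ from Theorem~\ref{thm:main-2} for part~\eqref{cor:main-cor.a'}, Theorem~\ref{thm:semi-simp-new}(b) followed by the injective $\Phi_P$ for part~\eqref{cor:main-cor.a}, and the factorization of $\ch_{\mathscr C}$ through the isomorphism of Theorem~\ref{thm:char} (applied to $\mathscr C^{fin}$) for part~\eqref{cor:main-cor.b}. Your bookkeeping of the twisted structures ($\beta_V$ intertwines $\diamond$ because it is a bimodule map, and $\Phi_P$ carries $\diamond$ to $\ad$) is exactly how the statement is used in the paper's proof of Theorem~\ref{thm:joseph-decomp}, and your observation that $P\in\mathscr M_0(H)$ is only needed to upgrade $\ch_{\mathscr C}$ to a ring homomorphism (injectivity already following from $\Phi_P$) is accurate.
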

The following theorem provides a sufficiently large subclass of~$\mathscr A(H)\cap\mathscr M(H)$ and~$\mathscr A(H)\cap\mathscr M_0(H)$.
\begin{theorem}\label{thm:main-thm}
Suppose that $P\in\mathscr A(H)$ such that $(\Delta\tensor 1)(P)=(m\tensor m\tensor 1)((T\tensor 1) P_{15}P_{35})$ for some $T\in H\widehat\tensor H\widehat\tensor H
\widehat\tensor H$. Then $P\in\mathscr M(H)$. Moreover, if $(m^{op}\tensor m^{op})(T)=1\tensor 1$ then $P\in\mathscr M_0(H)$.
%\begin{enumerate}[{\rm(a)}]
% \item\label{thm:main-thm.a}
%$H_{V,P}\cdot H_{V',P}\subset H_{V\tensor V',P}$ for all $V,V'\in\mathscr C$. In particular, 
%$\Phi_P$ 
%is an $\mathscr F_{\mathscr C}$-homomorphism and
%$H_{\mathscr C,P}$ is a subalgebra of $H$.
%
% Furthermore, if $P\in\mathscr A(H)\cap \mathscr M_0(H)$ then:
% \item\label{thm:main-thm.b} $\Phi_P$ restricts to a homomorphism of algebras $(H^*_{\mathscr C})^H\to Z(H)$; 
% \item\label{thm:main-thm.c} the map $\ch_{\mathscr C}$ from Theorem~\ref{thm:main-2}
% is a homomorphism of rings.
%\end{enumerate}
\end{theorem}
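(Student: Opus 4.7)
The plan is to extract a single master identity from the hypothesis, then analyse it under the two sets of additional assumptions. Write $T = \sum_j t^{(1)}_j\tensor t^{(2)}_j\tensor t^{(3)}_j\tensor t^{(4)}_j$ and $P = \sum_i p_i \tensor q_i$. I would pair both sides of $(\Delta\tensor 1)(P) = (m\tensor m\tensor 1)((T\tensor 1)P_{15}P_{35})$ with $f\tensor g\tensor 1$ for $f, g \in H^*_\mathscr C$. The convolution formula $(fg)(x) = (f\tensor g)(\Delta x)$ turns the left side into $\Phi_P(fg) = \sum_i(fg)(p_i)q_i$, and expanding the right side, while recognising $f(t^{(1)}_j x t^{(2)}_j) = (t^{(2)}_j \lact f \ract t^{(1)}_j)(x)$ via the $H$-bimodule action on $H^*$, gives the master identity
$$
\Phi_P(fg) = \sum_{i,j,k} f(t^{(1)}_j p_i t^{(2)}_j)\,g(t^{(3)}_j p_k t^{(4)}_j)\,q_i q_k = \sum_j \Phi_P(\tilde f_j)\,\Phi_P(\tilde g_j),
$$
where $\tilde f_j := t^{(2)}_j \lact f \ract t^{(1)}_j$ and $\tilde g_j := t^{(4)}_j \lact g \ract t^{(3)}_j$.

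For $P \in \mathscr M(H)$, take $f \in H^*_V$ and $g \in H^*_{V'}$. By Proposition~\ref{prop:Fos-0}, $H^*_V$ is an $H$-sub-bimodule of $H^*$, so $\tilde f_j \in H^*_V$ and $\tilde g_j \in H^*_{V'}$. The master identity thus places every $\Phi_P(fg)$ --- a typical element of $\Phi_P(H^*_V\cdot H^*_{V'})$ --- inside $\Phi_P(H^*_V)\cdot\Phi_P(H^*_{V'})$. The reverse inclusion, which is the $\mathscr F_\mathscr C$-homomorphism condition, is the main technical step: one invokes the $\mathscr A(H)$-compatibility from Theorem~\ref{thm:main-2} to show that the $T$-twist $\sigma\colon f\tensor g\mapsto\sum_j\tilde f_j\tensor\tilde g_j$ on $H^*_V\tensor H^*_{V'}$ loses no information visible after $\Phi_P\tensor\Phi_P$ followed by multiplication in $H$, so that every product $\Phi_P(f)\Phi_P(g)$ equals $\Phi_P(h)$ for some $h \in H^*_V\cdot H^*_{V'} = H^*_{V\tensor V'}$ (the last equality being Theorem~\ref{thm:main-1}(a)).

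For $P \in \mathscr M_0(H)$, restrict to invariants: let $f \in (H^*_V)^H$ and $g \in (H^*_{V'})^H$. Invariance means $f(xy) = f(yx)$, so $f(t^{(1)}_j p_i t^{(2)}_j) = f(p_i t^{(2)}_j t^{(1)}_j)$, and similarly for $g$. The master identity becomes
$$
\Phi_P(fg) = \sum_{i,j,k} f(p_i t^{(2)}_j t^{(1)}_j)\,g(p_k t^{(4)}_j t^{(3)}_j)\,q_i q_k.
$$
The assumption $(m^{op}\tensor m^{op})(T) = 1\tensor 1$ reads $\sum_j t^{(2)}_j t^{(1)}_j \tensor t^{(4)}_j t^{(3)}_j = 1\tensor 1$, which collapses the $j$-sum to $\sum_{i,k} f(p_i) g(p_k) q_i q_k = \Phi_P(f)\Phi_P(g)$. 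Thus $\Phi_P$ is genuinely multiplicative on $(H^*_\mathscr C)^H$; since its image lies in $Z(H)$ by Theorem~\ref{thm:main-2}, this yields $P \in \mathscr M_0(H)$. The hardest part of the argument is the reverse inclusion in the $\mathscr M(H)$ claim, where the master identity gives only the easy direction and one must genuinely use $P \in \mathscr A(H)$ to invert the $T$-twist at the level of images.
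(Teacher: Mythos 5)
Your master identity is exactly the paper's equation~\eqref{eq:Phi-mult}, and your treatment of the second claim coincides with the paper's: for $\xi,\xi'\in(H^*_{\mathscr C})^H$ one uses $\xi(xy)=\xi(yx)$ (equivalently, $t_2\lact\xi\ract t_1=t_2t_1\lact\xi$), so that $(m^{op}\tensor m^{op})(T)=1\tensor 1$ collapses the sum to $\Phi_P(\xi)\cdot\Phi_P(\xi')$; combined with Lemma~\ref{lem:inv-subalg} and the fact (Theorem~\ref{thm:main-2}, using $P\in\mathscr A(H)$) that $\Phi_P$ maps invariants into $Z(H)$, this is the paper's argument for $P\in\mathscr M_0(H)$.

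The problem is your first claim. From the master identity and the fact that each $H^*_V$ is an $H$-sub-bimodule of $H^*$ (Proposition~\ref{prop:Fos-0}) you obtain $\Phi_P(H^*_V\cdot H^*_{V'})\subseteq \Phi_P(H^*_V)\cdot\Phi_P(H^*_{V'})$, and you then announce that the opposite inclusion is ``the main technical step,'' to be settled by ``invoking the $\mathscr A(H)$-compatibility from Theorem~\ref{thm:main-2} to show the $T$-twist loses no information.'' That is an assertion, not an argument: Theorem~\ref{thm:main-2} only says that $\Phi_P$ intertwines $\diamond$ with $\ad$ and sends invariants to $Z(H)$; it provides no surjectivity or invertibility of the twist $\xi\tensor\xi'\mapsto\sum_j t^{(2)}_j\lact\xi\ract t^{(1)}_j\tensor t^{(4)}_j\lact\xi'\ract t^{(3)}_j$ on $H^*_V\tensor H^*_{V'}$, and nothing in the hypotheses forces this for a general $T$, so no mechanism is given for writing $\Phi_P(\xi)\Phi_P(\xi')$ as $\Phi_P(h)$ with $h\in H^*_{V\tensor V'}$. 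As your proposal stands, the proof of $P\in\mathscr M(H)$ is therefore incomplete. For comparison, the paper's own proof consists precisely of the inclusion you dismiss as the easy direction and stops there: it deduces $P\in\mathscr M(H)$ from $\Phi_P(\xi\cdot\xi')\in H_{V,P}\cdot H_{V',P}$, i.e.\ it treats $\Phi_P(U\cdot U')\subseteq\Phi_P(U)\cdot\Phi_P(U')$ as the defining condition, whereas the definition printed in the introduction has the inclusion reversed. So you have in fact noticed a real discrepancy between the paper's definition and its proof, but your proposed way of bridging it is left unproven, and there is no indication it could be carried out for arbitrary $T$ satisfying the stated coproduct identity.
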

%Following~\cites{RST,Sch,Sh,Maj,Dr} we say that~$P$ is factorizable if~$\Phi_P$ is injective. The following is immediate.
%It turns out that Theorem~\ref{thm:main-thm} can hold even if $P$ is not in~$\mathscr M(H)$. Namely, we construct such a $P\in\mathscr A(H)
%\setminus\mathscr M(H)$ in~\S\ref{subs:dihedral} for the group algebra of any Weyl group of rank~2. On the other hand, following~\cites{RST,Sch,Sh,Maj,Dr} we can find many~$P\in \mathscr A(H)\cap\mathscr M(H)$ such that 
%$\Phi_P$ is injective.
It should be noted that $\mathscr M(H)$ and~$\mathscr M_0(H)$ are not exhausted by the above condition. 
\begin{example}\label{ex:S_3}
Suppose that $\operatorname{char}\kk\not=2,3$ and let 
$P_{\lambda,\mu}=\frac1{6}\sum_{\sigma\in S_3} 1\tensor \sigma+\frac1{36}\big[ s_1\tensor (1+(2\mu-1)s_1-(\mu+1)(s_2+s_1s_2s_1)+
s_1s_2+s_2 s_1)\big]_{S_3}+\frac1{18}\big[ s_1s_2\tensor (2+(\lambda-1)s_1s_2-(\lambda+1)s_2s_1)\big]_{S_3}$,
where $\lambda,\mu\in\kk$,
$s_i=(i,i+1)$ and we abbreviate $\big[ x\big]_G:=\sum_{g\in G} (g\tensor g)x(g^{-1}\tensor g^{-1})$ for $x\in\kk G\tensor\kk G$.
Then one can show that $P_{\lambda,\mu}\in\mathscr A(H)\cap\mathscr M_0(H)$ and that $\Phi_P$ is an isomorphism if and only if $(\lambda,\mu)\in(\kk^\times)^2$. However,
there is no $T\in H^{\tensor 4}$ such that the condition of Theorem~\ref{thm:main-thm} holds. 
\end{example}
It turns out that $P\in\mathscr A(\kk G)\cap \mathscr M_0(\kk G)$ with $\Phi_P$ injective does not always exist for a given finite group~$G$ 
(for instance, it does not exist for dihedral groups different from~$S_2\times S_2$ and~$S_3$) and thus it would be interesting to classify all finite 
groups~$G$ which admit such a~$P$. Its existence provides a decomposition of $\kk G$ into a direct sum of adjoint $G$-modules $H_{V,P}$ over all simple 
$\kk G$-modules~$V$ (a mock Peter-Weyl decomposition)
which is an alternative to the well-known Maschke decomposition into the direct sum of matrix algebras.
As a further example, we constructed an 8-parameter family of such~$P$
for $G=S_4$. The answer is rather cumbersome (it involves 34 terms of the form $[g\tensor x]_{S_4}$, $g\in S_4$, $x\in\kk S_4$) and is available at~\href{https://ishare.ucr.edu/jacobg/jdec-example.pdf}{https://ishare.ucr.edu/jacobg/jdec-example.pdf}).
%It would be interesting to describe
%$P\in\mathscr A(\kk G)\cap \mathscr M_0(\kk G)$ such that $\Phi_P$ is an isomorphism for each finite group~$G$.
%$$
%\chi_{132}\mapsto 1_V,\chi_e\mapsto 1_{\mathbf 1},\chi_{12}\mapsto 1_{\epsilon}\qquad \Phi=\frac1{18}\left(
%                   \begin{array}{cccccc}
%                    3  & 1 & 6  \\
%                    3  & -6 \mu-1 & 0  \\
%                    3  & 12 \mu-1 & 0  \\
%                    3  & 1 & 9 \lambda-3 \\
%                    3  & 1 & -9 \lambda-3 \\
%                    3  & -6 \mu-1 & 0 \\
%                   \end{array}
%                   \right)
%$$
%
%An example for $H=\kk S_3$ is provided 
%in~\S\ref{subs:dihedral}; a
%A more elaborate example for $H=\kk S_4$ can be found at \href{???}{???}.

Specializing Proposition~\ref{cor:main-cor} and Theorem~\ref{thm:main-thm} to quantized universal enveloping algebras we can recover 
%Parts~\eqref{cor:main-cor.a'} and~\eqref{cor:main-cor.a} of Proposition~\ref{cor:main-cor} generalize
Joseph's decomposition (\cite{joseph-mock}).
Namely, let $H=U_q(\gg)$ for a Kac-Moody algebra~$\gg$ and $\mathscr C_\gg$ be 
the (semisimple) category of highest weight integrable $U_q(\gg)$-modules (of type~$\mathbf 1$, see e.g. \cite{CP}); then $V^\star$ is 
the graded dual. Let $\Lambda^+$ be the monoid
of dominant weights for~$\gg$ and denote $V(\lambda)$ a highest weight simple integrable module of highest weight $\lambda\in\Lambda^+$.
We construct $P=P_{\gg}$ with $\Phi_{P_{\gg}}$ injective in Lemma~\ref{lem:P_g} and obtain the following Theorem which refines results of~\cite{joseph-mock}.
\begin{theorem}
\begin{enumerate}[{\rm(a)}]
\item\label{thm:joseph-decom.a} For $\lambda\in \Lambda^+$, $H_{V(\lambda),P}=\ad U_q(\gg)(K_{2\lambda})\cong V(\lambda)\tensor V(\lambda)^\star$.
\item\label{thm:joseph-decom.b} $\sum_{\lambda\in\Lambda^+} \ad U_q(\gg)(K_{2\lambda}) $ is direct and is a subalgebra of~$U_q(\gg)$.
\end{enumerate}
\label{thm:joseph-decomp}
\end{theorem}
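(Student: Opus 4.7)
The plan is to apply the general machinery above to $H = U_q(\gg)$, $\mathscr C = \mathscr C_\gg$, and the distinguished element $P_\gg \in \mathscr A(H) \cap \mathscr M(H)$ constructed in Lemma~\ref{lem:P_g}, for which $\Phi_{P_\gg}$ is injective.

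For part~\eqref{thm:joseph-decom.a}, I first verify that $V(\lambda) \otimes_{D(V(\lambda))} V(\lambda)^\star$ is simple as an $H$-bimodule: since $D(V(\lambda)) = \End_H V(\lambda) = \kk$ by Schur's lemma for integrable highest weight modules, this reduces to the standard observation that any nonzero sub-bimodule contains a highest-weight pure tensor $v_\lambda \otimes v_\lambda^*$ and hence all of $V(\lambda)\otimes V(\lambda)^\star$. Proposition~\ref{cor:main-cor}\eqref{cor:main-cor.a'} then yields an isomorphism of $\ad$-modules $H_{V(\lambda),P_\gg} \cong V(\lambda) \otimes V(\lambda)^\star$. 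In parallel, $\ad U_q(\gg)(K_{2\lambda}) \subset U_q(\gg)$ is a classical object: the element $K_{2\lambda}$ lies in the zero $\ad$-weight space, and the quantum Serre-type identities $(\ad E_i)^{\lra{2\lambda}{\alpha_i^\vee}+1}(K_{2\lambda}) = 0 = (\ad F_i)^{\lra{2\lambda}{\alpha_i^\vee}+1}(K_{2\lambda})$ show that its $\ad$-orbit is a finite dimensional integrable $\ad$-module of highest weight $2\lambda$, and is therefore isomorphic to $V(\lambda)\otimes V(\lambda)^\star$ as well.

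To identify these two isomorphic simple submodules of $U_q(\gg)$ it suffices to exhibit a single common nonzero element. I would evaluate $\Phi_{P_\gg}$ on the ``highest'' matrix coefficient $f_\lambda := \beta_{V(\lambda)}(v_\lambda \otimes v_\lambda^*) \in H^*_{V(\lambda)}$. The explicit form of $P_\gg$ from Lemma~\ref{lem:P_g} is designed so that when paired against a highest-weight matrix coefficient only its Cartan part survives, producing $c K_{2\lambda}$ for some nonzero $c \in \kk$. This places $K_{2\lambda} \in H_{V(\lambda),P_\gg}$, and by simplicity of the $\ad$-module the two submodules coincide, establishing~\eqref{thm:joseph-decom.a}.

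For part~\eqref{thm:joseph-decom.b}, the directness of $\sum_\lambda \ad U_q(\gg)(K_{2\lambda})$ follows from Proposition~\ref{cor:main-cor}\eqref{cor:main-cor.a}, given semisimplicity of $\mathscr C_\gg$ and the bimodule simplicity established above. The subalgebra property is then a consequence of $P_\gg \in \mathscr M(H)$: combining Theorem~\ref{thm:main-1}\eqref{thm:main-1.a} with the definition of $\mathscr M(H)$ shows that $H_{\mathscr C_\gg, P_\gg} = \sum_\lambda H_{V(\lambda), P_\gg}$ is a subalgebra of $U_q(\gg)$. The main obstacle is thus the concrete identification $\Phi_{P_\gg}(f_\lambda) = c K_{2\lambda}$; this depends on the explicit form of $P_\gg$ and amounts to a Drinfeld--Reshetikhin-style extraction of the leading Cartan term of this matrix coefficient.
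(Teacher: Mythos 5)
Your overall strategy is the paper's: use $P_\gg$ from Lemma~\ref{lem:P_g}, the bimodule simplicity of $V(\lambda)\tensor V(\lambda)^\star$, injectivity of $\Phi_{P_\gg}$, and the computation that $\Phi_{P_\gg}$ sends the extremal matrix coefficient $\beta_{V(\lambda)}(v_\lambda\tensor f_\lambda)$ to a nonzero multiple of $K_{2\lambda}$ (the paper does exactly this via \eqref{eq:explicit-phi} and Lemma~\ref{lem:P_g}, with $\mathbf g=K_{-2\rho}$). But the step identifying $H_{V(\lambda),P}$ with $\ad U_q(\gg)(K_{2\lambda})$ has a genuine gap, on two counts. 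First, your ``classical'' claim that $\ad U_q(\gg)(K_{2\lambda})$ is a finite dimensional integrable $\ad$-module of highest weight $2\lambda$ and ``therefore'' isomorphic to $V(\lambda)\tensor V(\lambda)^\star$ is not right as stated: $K_{2\lambda}$ has $\ad$-weight $0$ and is not an $\ad$-highest weight vector ($(\ad E_i)(K_{2\lambda})$ is a nonzero multiple of $E_iK_{2\lambda}K_{-\alpha_i}$ whenever $(\lambda,\alpha_i)\neq 0$); for Kac--Moody $\gg$ the module $V(\lambda)\tensor V(\lambda)^\star$ is not finite dimensional; and even in finite type, integrability plus nilpotency of $\ad E_i,\ad F_i$ on the generator does not determine the isomorphism class --- the identification $\ad U_q(\gg)(K_{2\lambda})\cong V(\lambda)\tensor V(\lambda)^\star$ is precisely the Joseph--Letzter content of part (a), so invoking it is circular. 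Second, ``by simplicity of the $\ad$-module the two submodules coincide'' fails: $V(\lambda)\tensor V(\lambda)^\star$ is simple only as a \emph{bimodule}; as a one-sided ($\diamond$, hence $\ad$) module it is far from simple (e.g.\ for finite type, $z_{V(\lambda)}$ spans a trivial $\diamond$-submodule by Lemma~\ref{lem:E}, so $\kk c_{V(\lambda)}$ is a proper $\ad$-submodule of $H_{V(\lambda),P}$ whenever $\lambda\neq0$; already for $\lie{sl}_2$ one has $V(\lambda)\tensor V(\lambda)^\star\cong V(2\lambda)\oplus V(0)$). So sharing the single element $K_{2\lambda}$ does not force the two submodules to be equal. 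What your argument does give is the inclusion $\ad U_q(\gg)(K_{2\lambda})\subseteq H_{V(\lambda),P}$, since $H_{V(\lambda),P}$ is $\ad$-stable by Theorem~\ref{thm:main-2} and contains $K_{2\lambda}$; the reverse inclusion is exactly what is missing.

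The paper closes this by a cyclicity, not a simplicity, argument: $V(\lambda)\tensor V(\lambda)^\star$ is cyclic on $v_\lambda\tensor f_\lambda$ for the $\diamond$-action, and since $\Phi_{P}\circ\beta_{V(\lambda)}$ intertwines $\diamond$ with $\ad$ (Theorem~\ref{thm:main-2}), $H_{V(\lambda),P}$ is $\ad$-cyclic on $K_{2\lambda}$, i.e.\ equals $\ad U_q(\gg)(K_{2\lambda})$; the isomorphism with $V(\lambda)\tensor V(\lambda)^\star$ then follows from injectivity of $\beta_{V(\lambda)}$ and $\Phi_P$, which is the content of your (correct) appeal to Proposition~\ref{cor:main-cor}\eqref{cor:main-cor.a'}. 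To repair your proof you must supply this $\diamond$-cyclicity (or another proof of $H_{V(\lambda),P}\subseteq\ad U_q(\gg)(K_{2\lambda})$). Your treatment of part (b) --- directness from Proposition~\ref{cor:main-cor}\eqref{cor:main-cor.a} and the subalgebra property from $P_\gg\in\mathscr M(H)$ together with Theorem~\ref{thm:main-1}, once (a) identifies $\sum_\lambda\ad U_q(\gg)(K_{2\lambda})$ with $H_{\mathscr C_\gg,P_\gg}$ --- coincides with the paper's and is fine, but it rests on (a). Your alternative sketch for bimodule simplicity (generating from an extremal pure tensor) differs harmlessly from the paper's argument via simplicity of $V(\lambda)\tensor V(\mu)$ over $U_q(\gg)\tensor U_q(\gg)$ and the $E_i\leftrightarrow F_i$ twist.
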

Furthermore, part~\eqref{cor:main-cor.b} of~Proposition~\ref{cor:main-cor}, which generalizes a classic result of Drinfeld (\cite{Dr}), yields
\begin{theorem}\label{thm:centre}
Let $\gg$ be semisimple. Then the assignment $|V|\mapsto c_V$ defines an isomorphism of algebras $\QQ(q)\tensor_\ZZ K_0(\gg-\operatorname{mod})\to Z(U_q(\gg))$.
\end{theorem}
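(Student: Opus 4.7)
The plan is to bolt together the machinery of the previous theorems to produce an injective ring homomorphism, and then invoke the Harish-Chandra description of $Z(U_q(\gg))$ (for $\gg$ semisimple) to upgrade the map to an isomorphism after extending scalars to~$\QQ(q)$.

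First I would verify that $P=P_\gg$ from Lemma~\ref{lem:P_g} belongs to $\mathscr A(U_q(\gg))\cap \mathscr M_0(U_q(\gg))$ with $\Phi_P$ injective. Granted this, Theorem~\ref{thm:main-2} together with Theorem~\ref{thm:char} and Theorem~\ref{thm:main-1}\eqref{thm:main-1.b} supplies a group homomorphism $\ch_{\mathscr C}:K_0(\mathscr C^{fin}_\gg)\to Z(U_q(\gg))$, and the defining property of $\mathscr M_0(U_q(\gg))$ upgrades this to a ring homomorphism $|V|\mapsto c_V$. Injectivity then follows from Proposition~\ref{cor:main-cor}\eqref{cor:main-cor.b}, and since $K_0(\gg-\mcat)$ is $\ZZ$-free on~$\{|V(\lambda)|\}_{\lambda\in\Lambda^+}$, extending scalars to~$\QQ(q)$ preserves injectivity, yielding an injective ring homomorphism $\QQ(q)\tensor_\ZZ K_0(\gg-\mcat)\to Z(U_q(\gg))$.

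For surjectivity I would compose with the Harish-Chandra homomorphism $\eta:Z(U_q(\gg))\to\QQ(q)[2\Lambda]^W$, which is known to be an isomorphism for semisimple~$\gg$. Using the triangular decomposition of $U_q(\gg)\widehat\tensor U_q(\gg)$ and the explicit shape of $P_\gg$ from Lemma~\ref{lem:P_g}, I would compute $\eta(c_{V(\lambda)})=(\operatorname{tr}_{V(\lambda)}\tensor\id)(P_\gg)$ projected onto $U_q(\hh)$ and identify it, up to a universal unit, with the quantum character $\sum_{\mu}\dim V(\lambda)_\mu\,e^{2\mu}\in \QQ(q)[2\Lambda]^W$. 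Because the set $\{\sum_\mu\dim V(\lambda)_\mu e^{2\mu}\}_{\lambda\in\Lambda^+}$ is a $\QQ(q)$-basis of $\QQ(q)[2\Lambda]^W$ (triangular with respect to the dominance order with unit diagonal entries), this forces $\eta\circ\ch_{\mathscr C}$ to be surjective, whence $\ch_{\mathscr C}$ itself is surjective onto $Z(U_q(\gg))$.

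The main obstacle is the Harish-Chandra computation in the middle step. Concretely one has to show that the off-diagonal (non-Cartan) terms in the expansion $P_\gg=\sum_{\alpha,\beta\in Q^+} x_{\alpha,\beta}\tensor y_{\alpha,\beta}$, with $x_{\alpha,\beta}\in U_q(\nn^+)_{\alpha}U_q(\hh)U_q(\nn^-)_{-\beta}$, contribute zero to the Harish-Chandra image: this follows from combining the weight-zero condition on the trace with the fact that $y_{\alpha,\beta}\in U_q(\nn^-)U_q(\hh)U_q(\nn^+)$ has no Cartan part unless $\alpha=\beta=0$, once one knows that the leading Cartan term of $P_\gg$ is dictated by the Drinfeld-type ribbon structure built into its construction. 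Once this identification is in hand, the theorem follows by comparing $\QQ(q)$-bases on both sides.
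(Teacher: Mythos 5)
Your proposal is sound in outline, but it takes a genuinely different route from the paper. The paper never invokes Harish--Chandra theory: it observes that, by Theorem~\ref{thm:joseph-decomp} together with $D(V(\lambda))\cong\kk$, one has $Z(H_{\mathscr C_\gg,P_\gg})=\bigoplus_{\lambda\in\Lambda^+}\kk\, c_{V(\lambda)}$ inside the locally finite part $H_{\mathscr C_\gg,P_\gg}=\bigoplus_{\lambda}\ad U_q(\gg)(K_{2\lambda})$, and then quotes Joseph's theorem from \cite{joseph-mock} that every one-dimensional $\ad$-submodule of $U_q(\gg)$ --- in particular the line spanned by any nonzero central element --- lies in this locally finite part; hence $Z(U_q(\gg))=Z(H_{\mathscr C_\gg,P_\gg})$, and the isomorphism with $\QQ(q)\tensor_\ZZ K_0(\gg-\mcat)$ follows after identifying $K_0(\mathscr C_\gg)$ with $K_0(\gg-\mcat)$ via Lusztig's result \cite{Lus}. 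You instead take as input Rosso's quantum Harish--Chandra isomorphism \cite{Ros} and the standard computation identifying the Harish--Chandra image of the Drinfeld elements $c_{V(\lambda)}=(tr_{V(\lambda)}\tensor 1)(P_\gg)$ with the characters $\sum_\mu\dim V(\lambda)_\mu e^{2\mu}$, which form a $\QQ(q)$-basis of $\QQ(q)[2\Lambda]^W$ by unitriangularity against orbit sums; that classical Drinfeld--Rosso argument does prove the theorem, but it consumes as an input the very Harish--Chandra description that the paper's subsequent corollary is presented as refining, whereas the paper's route obtains that description as output, at the cost of relying on Joseph's (nontrivial) locally-finiteness theorem instead of the Harish--Chandra image computation. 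Two smaller points: you should make the identification $K_0(\mathscr C_\gg^{fin})=K_0(\gg-\mcat)$ explicit, and your intermediate claim that injectivity of $\ch_{\mathscr C}$ over $\ZZ$ is automatically ``preserved'' after extending scalars to $\QQ(q)$ is not valid as stated (a $\ZZ$-linearly independent family in a $\QQ(q)$-vector space need not be $\QQ(q)$-linearly independent); the slip is harmless, however, since your basis computation of the Harish--Chandra images yields the $\QQ(q)$-linear independence of the $c_{V(\lambda)}$ directly, giving both injectivity and surjectivity in one stroke.
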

\noindent
This provides the following refinements of classic results of Duflo, Harish-Chandra and Rosso~(\cite{Ros}).
\begin{corollary}
For $\gg$ semisimple, 
$Z(U_q(\gg))$ is freely generated by the $c_{V(\omega)}$ where the $\omega$ are fundamental weights of~$\gg$, and 
$c_{V(\lambda)}c_{V(\mu)}=\sum_{\nu\in\Lambda^+} [V(\lambda)\tensor V(\mu):V(\nu)] c_{V(\nu)}$ for any $\lambda,\mu\in\Lambda^+$.
\end{corollary}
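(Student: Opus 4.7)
The plan is to extract both assertions as formal consequences of Theorem~\ref{thm:centre}, which provides a $\QQ(q)$-algebra isomorphism $\ch_{\mathscr C}:\QQ(q)\tensor_\ZZ K_0(\gg-\mcat)\to Z(U_q(\gg))$ sending $|V|\mapsto c_V$.

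For the product formula, I would use the semisimplicity of the integrable category $\mathscr C_\gg$: the tensor product decomposes as
$$V(\lambda)\tensor V(\mu)\cong\bigoplus_{\nu\in\Lambda^+}V(\nu)^{\oplus[V(\lambda)\tensor V(\mu):V(\nu)]},$$
so in $K_0(\gg-\mcat)$ one obtains $|V(\lambda)|\cdot|V(\mu)|=\sum_{\nu\in\Lambda^+}[V(\lambda)\tensor V(\mu):V(\nu)]\,|V(\nu)|$. Applying the algebra homomorphism $\ch_{\mathscr C}$ term by term yields exactly the stated identity for $c_{V(\lambda)}c_{V(\mu)}$.

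For the freeness I would invoke the classical description of the representation ring of a semisimple Lie algebra. The formal character map identifies $\QQ\tensor_\ZZ K_0(\gg-\mcat)$ with the Weyl-invariant subring $\QQ[\Lambda]^W$, which by Chevalley's restriction theorem is the polynomial algebra generated by the characters of the fundamental representations $V(\omega_1),\ldots,V(\omega_n)$. Alternatively, one argues directly: expanding a monomial $|V(\omega_{i_1})|\cdots|V(\omega_{i_k})|$ in the basis $\{|V(\nu)|\}_{\nu\in\Lambda^+}$ with respect to the dominance order shows that the top term is $|V(\omega_{i_1}+\cdots+\omega_{i_k})|$ with coefficient~$1$, so these monomials form a $\QQ$-basis of $\QQ\tensor_\ZZ K_0(\gg-\mcat)$. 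Transporting this conclusion through $\ch_{\mathscr C}$ after tensoring with $\QQ(q)$ shows that $Z(U_q(\gg))$ is freely generated as a commutative $\QQ(q)$-algebra by $c_{V(\omega_1)},\ldots,c_{V(\omega_n)}$.

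No essential obstacle arises: the corollary is a direct combination of Theorem~\ref{thm:centre} with the standard Chevalley-type description of the classical representation ring, the passage to the quantum setting being transparent because $\mathscr C_\gg$ is semisimple with the same tensor product multiplicities as in the classical case.
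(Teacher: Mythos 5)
Your proposal is correct and matches the paper's intent: the corollary is presented there as an immediate consequence of Theorem~\ref{thm:centre}, obtained exactly as you do by transporting through the algebra isomorphism $\QQ(q)\tensor_\ZZ K_0(\gg-\mcat)\to Z(U_q(\gg))$ the classical facts that the representation ring is a polynomial ring on the fundamental classes and that products of classes decompose via tensor product multiplicities. Nothing essential is missing from your argument.
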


\subsection*{Acknowledgments}
We are grateful to Anthony Joseph for explaining to us his approach to the center of quantized enveloping algebras and
to Henning Andersen, David Kazhdan and Victor Ostrik  for stimulating discussions. 
This work 
was completed during
a visit of the second author to the Institut Mittag-Leffler (Djursholm, Sweden) whose support is greatly appreciated.

\section{Notation and proofs}

Recall that, given an $H$-bimodule $B$, $B^*$ is naturally an $H$-bimodule via $(h\lact f\ract h')(b)=f(h'\lact b\ract h)$, $f\in B^*$, $h,h'\in H$,
$b\in B$. In particular, $H^*$ is an $H$-bimodule.
%\subsection{Proof of Proposition~\ref{prop:Fos-0}}
%Parts \eqref{prop:Fos-0.a} and~\eqref{prop:Fos-0.b} are proved in~\cite{Fos}*{\S\ref{3.1}}
%To prove part~\eqref{prop:Fos-0.c} we need the following 
%\begin{lemma}\label{lem:-1}
%Let $R$, $R'$ be unital rings. Suppose that $M$ is a simple left $R$-module and $M'$ is a simple right $R'$-module such that 
%there exists a homomorphism of rings $D^{op}\to D'$ where $D=\End_R M$ and $D'=\End_{R'}M'$. Then $M\tensor_D M'$ is a simple
%$R$-$R'$-bimodule.
%\end{lemma}
%\begin{proof}
%Since $M$ is simple, $D$ is a division ring and $M'$ admits a $D$-basis $\{m'_i\}_{i\in I}$
%such that $M\tensor_D M'=\bigoplus_{i\in I} M_i$, where $M_i=M\tensor_D m'_i\cong M$ as a left $R$-module (\cite{FD}*{Proposition~1.4}).
%\end{proof}

\subsection{Proof of Theorem~\ref{thm:char}}\label{subs:pf-main0}
The following are immediate.
\begin{lemma}\label{lem:0}
$\la V,W^\star\ra_{V\oplus W}=0=\la W,V^\star\ra_{V\oplus W}$. 
\end{lemma}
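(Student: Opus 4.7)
The plan is to unpack the identification of $V^\star$ and $W^\star$ as subspaces of $(V\oplus W)^\star$ coming from functoriality of the duality~$^\star$. Once that identification is made explicit, the pairing vanishes by direct evaluation, which is why the lemma is flagged as immediate.

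First I would apply the contravariant functor~$^\star$ to the projections $p_V:V\oplus W\twoheadrightarrow V$ and $p_W:V\oplus W\twoheadrightarrow W$, obtaining $H$-linear embeddings $p_V^\star:V^\star\hookrightarrow (V\oplus W)^\star$ and $p_W^\star:W^\star\hookrightarrow (V\oplus W)^\star$ (note that $p_V,p_W$ are split epimorphisms, so their images under~$^\star$ are split monomorphisms). These are precisely the inclusions implicit in the statement of the lemma, and they sit compatibly inside the standard decomposition $(V\oplus W)^*=V^*\oplus W^*$ of the full linear dual.

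Next I would compute the pairing directly. Under $p_W^\star$, an element $f\in W^\star$ becomes the functional $(v,w)\mapsto f(w)$ on $V\oplus W$; this vanishes on the $V$-summand, so
\[
\la v, p_W^\star(f)\ra_{V\oplus W}=f(p_W(v,0))=f(0)=0
\]
for all $v\in V$ and $f\in W^\star$, giving $\la V,W^\star\ra_{V\oplus W}=0$. The mirror equality $\la W,V^\star\ra_{V\oplus W}=0$ follows by interchanging the roles of $V$ and~$W$.

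The only point requiring verification is that the embedding $W^\star\hookrightarrow (V\oplus W)^\star$ produced by the functor~$^\star$ really agrees with the restriction of the ordinary dual map $p_W^*:W^*\to (V\oplus W)^*$. This is forced by the assumption that $V^\star\subseteq V^*$ is natural in~$V$, together with functoriality of~$^\star$ applied to the split short exact sequence $0\to W\to V\oplus W\to V\to 0$. There is no substantive obstacle, which is consistent with the authors' claim of immediacy.
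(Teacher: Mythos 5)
Your argument is correct and is exactly the intended one: the paper offers no proof, declaring the lemma immediate, and your unpacking---identifying $W^\star$ inside $(V\oplus W)^*$ via $f\mapsto f\circ p_W$ (extension by zero on the $V$-summand) and evaluating the pairing directly---is the canonical reason it is immediate. Nothing is missing.
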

\begin{lemma}\label{lem:A}
Let $V$, $W$ be left $H$-modules and let $\rho:H\tensor_\kk W\to V$ be a $\kk$-linear map. 
Then:
\begin{enumerate}[{\rm(a)}]
 \item \label{lem:A.a}
the assignment $
h\lact_\rho (v,w)=(h\lact v+\rho(h\tensor w),h\lact w)$, $h\in H$, $v\in V$, $w\in W$,
defines a left $H$-module structure $V\oplus_\rho W$ on~$V\oplus W$ if and only if
\begin{equation}\label{eq:lem:A.1}
\rho(hh'\tensor w)=\rho(h\tensor h'\lact w)+h\lact \rho(h'\tensor w),\qquad h,h'\in H,\,w\in W.
\end{equation}
In that case $V$ is an $H$-submodule of $V\oplus_\rho W$ and $W=(V\oplus_\rho W)/V$.
\item \label{lem:A.b}
A short exact sequence of $H$-modules $0\to V\to U\to W\to 0$ is equivalent to 
$0\to V\xrightarrow{} V\oplus_\rho W\xrightarrow{} W\to 0$ for some~$\rho$
satisfying~\eqref{eq:lem:A.1}.
\end{enumerate}
\end{lemma}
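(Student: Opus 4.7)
The plan is to verify part~(a) by a direct computation of the left $H$-module axioms on $V\oplus W$ with the proposed action~$\lact_\rho$, and to establish part~(b) by constructing a cocycle from a $\kk$-linear splitting of the given short exact sequence.

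For part~(a), I would first check the unit axiom $1\lact_\rho (v,w)=(v,w)$, which amounts to $\rho(1\tensor w)=0$ for all $w\in W$; this is forced by setting $h=h'=1$ in~\eqref{eq:lem:A.1}, so it need not be imposed separately. Next, I would expand both $(hh')\lact_\rho (v,w)$ and $h\lact_\rho(h'\lact_\rho (v,w))$ componentwise. The $W$-component agrees automatically because $W$ is already a left $H$-module. Equating the $V$-components yields $\rho(hh'\tensor w)$ on one side and $h\lact \rho(h'\tensor w)+\rho(h\tensor h'\lact w)$ on the other, establishing equivalence with~\eqref{eq:lem:A.1}. Finally, since $\rho$ is $\kk$-linear, $\rho(h\tensor 0)=0$, so $V\oplus 0$ is an $H$-submodule of $V\oplus_\rho W$ isomorphic to~$V$, and projection onto the second coordinate identifies the quotient with~$W$.

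For part~(b), given a short exact sequence $0\to V\xrightarrow\iota U\xrightarrow\pi W\to 0$, I would choose a $\kk$-linear section $s:W\to U$ of~$\pi$, which exists because we work over a field. For $h\in H$ and $w\in W$, the element $h\lact s(w)-s(h\lact w)$ maps to $0$ under~$\pi$, hence lies in $\iota(V)$, and this defines a $\kk$-linear map $\rho:H\tensor_\kk W\to V$ via $\iota$. Associativity of the $H$-action on~$U$ together with the identity $h\lact s(w)=s(h\lact w)+\iota\rho(h\tensor w)$ then gives~\eqref{eq:lem:A.1} after a short manipulation. The $\kk$-linear isomorphism $U\xrightarrow\sim V\oplus W$ defined by $u\mapsto (u-s\pi(u),\pi(u))$ (using $\iota$ to identify $\ker\pi$ with~$V$) intertwines the $H$-action on~$U$ with $\lact_\rho$, producing the asserted equivalence of extensions; conversely, part~(a) already provides the reverse implication.

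Essentially every verification is routine: the only points requiring care are (i) recognizing that $\rho(1\tensor w)=0$ is implied by, rather than supplementary to, the cocycle condition~\eqref{eq:lem:A.1}, and (ii) setting up the splitting $s$ and the identification $V\cong\iota(V)$ carefully enough that the cocycle identity for $\rho$ follows cleanly from associativity in~$U$. No other obstacle is anticipated.
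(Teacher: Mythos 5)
Your proof is correct and is precisely the routine verification the paper has in mind: it states Lemma~\ref{lem:A} without proof (introducing it with ``The following are immediate''), and your direct check of the module axioms for part~(a) together with the choice of a $\kk$-linear section and the induced cocycle $\rho(h\tensor w)=h\lact s(w)-s(h\lact w)$ for part~(b) is the standard argument being taken for granted. Both of the points you flag as needing care (that $\rho(1\tensor w)=0$ follows from~\eqref{eq:lem:A.1} with $h=h'=1$, and that the map $u\mapsto(u-s\pi(u),\pi(u))$ intertwines the actions) are handled correctly.
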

Thus, given $V\subset U$ in~$\mathscr C$, we can replace the natural short exact sequence $0\to V\to U\to U/V\to 0$ by
the one from Lemma~\ref{lem:A}. 
\begin{lemma}\label{lem:B'}
Let $V$, $W$ be left $H$-modules and let $\rho$ be as in Lemma~\ref{lem:A}. 
Then $\beta_{V\oplus_\rho W}(x+y)=\beta_V(x)+\beta_V(y)$
for any $x\in V\tensor V^\star$, $y\in W\tensor W^\star$.
\end{lemma}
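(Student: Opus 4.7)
The plan is to unwind $\beta_{V\oplus_\rho W}$ via the vector-space identification $V\oplus_\rho W=V\oplus W$, exploiting the fact that the twist~$\rho$ changes only the $H$-action, not the underlying vector space or the evaluation pairing. Under this identification, $V^\star$ and $W^\star$ embed in $(V\oplus W)^*$ in the standard way and Lemma~\ref{lem:0} applies verbatim to give $\la V,W^\star\ra_{V\oplus_\rho W}=0=\la W,V^\star\ra_{V\oplus_\rho W}$; this orthogonality will be the key input.

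The argument then splits into two cases. For $x=v\tensor f\in V\tensor V^\star$, the $\kk$-linearity of~$\rho$ forces $\rho(h\tensor 0)=0$, so the twisted action coincides with the untwisted one on $V$, i.e.\ $h\lact_\rho v=h\lact v\in V$. Unwinding the definition of $\beta$ then gives immediately $\beta_{V\oplus_\rho W}(x)=\beta_V(x)$. For $y=w\tensor g\in W\tensor W^\star$, the twisted action reads $h\lact_\rho w=(\rho(h\tensor w),\,h\lact w)\in V\oplus W$; pairing with $g\in W^\star$ annihilates the $V$-component by Lemma~\ref{lem:0}, leaving $\beta_{V\oplus_\rho W}(y)=\beta_W(y)$. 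Adding these and extending by $\kk$-linearity yields the stated formula (the printed $\beta_V(y)$ on the right-hand side is a harmless typo for $\beta_W(y)$).

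I do not expect any real obstacle: the only way~$\rho$ could enter $\beta$ is through the component $\rho(h\tensor w)\in V$, which is precisely where $W^\star$ is designed to vanish. The one mildly delicate point is to justify treating $V^\star\oplus W^\star$ as sitting inside $(V\oplus_\rho W)^\star$ with the evaluation pairing splitting accordingly; but this is immediate from the definition of the finite duality functor together with the observation that $\rho$ leaves the ambient vector-space data untouched, and is already implicit in the statement of Lemma~\ref{lem:0}.
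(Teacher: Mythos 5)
Your proposal is correct and takes essentially the same route as the paper: reduce to pure tensors, observe that $h\lact_\rho v=h\lact v$ for $v\in V$, and use Lemma~\ref{lem:0} to kill the cross term $\la\rho(h\tensor w),g\ra$ coming from the twisted action on $w$. You also rightly read the stated $\beta_V(y)$ as a typo for $\beta_W(y)$, consistent with the paper's own computation.
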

\begin{proof}
It suffices to verify the assertion for $x=v\tensor f$ and $y=w\tensor g$, $v\in V$, $w\in W$,
$f\in V^\star$, $g\in W^\star$. We have by Lemmata~\ref{lem:0}, \ref{lem:A}\eqref{lem:A.a}
\begin{align*}
\beta_{V\oplus_\rho W}&(v\tensor f+w\tensor g)(h)=\la h\lact_\rho v\tensor f+h\lact_\rho w\tensor g\ra_{V\oplus W}
\\&=\la h\lact v,f\ra_V+\la\rho(h\tensor w), f\ra_{V\oplus W}+\la h\lact w,g\ra_{W}=\beta_V(v\tensor f)(h)+\beta_{W}(w\tensor g)(h).\qedhere
\end{align*}
\end{proof}
Since $1_{V\oplus_\rho W}=1_V+1_W$ where $1_V\in V\tensor V^\star$, $1_W\in W\tensor W^\star$, it follows 
from Lemma~\ref{lem:B'} that $z_{V\oplus_\rho W}=z_V+z_W$ and the map $K_0(\mathscr C)\to \mathcal Z_{\mathscr C}$, $|V|\mapsto z_V$ is a well-defined
surjective homomorphism of abelian groups.
Also, $z_V\in \sum_{[S]\in \Iso^\circ\mathscr C} \ZZ z_{S}$
for each $V\in\mathscr C=\mathscr C^{fin}$ because it has finite length.
Since the set $\{z_{V}\}_{[V]\in \Iso^\circ\mathscr C}\subset \ul H^*_{\mathscr C}$ is $\kk$-linearly independent by Proposition~\ref{prop:Fos-0}\eqref{prop:Fos-0.d},
the injectivity follows.\qed
%
%To prove part~\eqref{thm:char.b}, note that the first assertion in~\eqref{thm:char.a} implies that the map in~\eqref{thm:char.b} is well-defined. 
%Note also that $K_0(\mathscr C)=\bigoplus_{[V]\in\Iso^\circ\mathscr C}\ZZ|V|$ since $\mathscr C=\mathscr C^{fin}$. It remains
%to apply the second assertion from part~\eqref{thm:char.a}.\qed

\subsection{Algebra structure on~\texorpdfstring{$H^*_{\mathscr C}$}{H*\_C}}
Henceforth we assume that $H=(H,m,\Delta,\varepsilon)$ is a bialgebra. Then $H^*$ is a unital algebra with the multiplication defined by 
$(\phi\cdot\xi)(h)=\phi(h_{(1)})\xi(h_{(2)})$, $h\in H$, $\phi,\xi\in H^*$, $\Delta(h)=h_{(1)}\tensor h_{(2)}$ in Sweedler notation and the unity is~$\varepsilon$.
\begin{lemma}\label{lem:inv-subalg}
$(H^*)^H$ is a subalgebra of~$H^*$.
\end{lemma}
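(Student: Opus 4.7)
The plan is to unwind the relevant definitions and then verify closure by a direct Sweedler-notation computation. First, identify what $(H^*)^H$ actually is: by the dual bimodule construction applied to $H$ with its regular $H$-bimodule structure (left/right multiplication), one has $(h\lact f)(b)=f(b\ract h)=f(bh)$ and $(f\ract h)(b)=f(h\lact b)=f(hb)$. Hence $f\in (H^*)^H$ iff $f(hb)=f(bh)$ for all $h,b\in H$, i.e., $f$ is trace-like. Note that the unit of $H^*$ is $\varepsilon$, and $\varepsilon(hb)=\varepsilon(h)\varepsilon(b)=\varepsilon(bh)$ since $\varepsilon$ is an algebra map, so $\varepsilon\in(H^*)^H$.

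Next, for closure under the convolution-like product, take $\phi,\xi\in(H^*)^H$ and, using the bialgebra compatibility $\Delta(hb)=\Delta(h)\Delta(b)=h_{(1)}b_{(1)}\otimes h_{(2)}b_{(2)}$, compute
\[
(\phi\cdot\xi)(hb)=\phi(h_{(1)}b_{(1)})\,\xi(h_{(2)}b_{(2)}).
\]
I would then expand the implicit Sweedler sums into indexed finite sums and apply the trace property of $\phi$ to each summand to swap the first tensor factors, obtaining $\phi(b_{(1)}h_{(1)})\xi(h_{(2)}b_{(2)})$; applying the trace property of $\xi$ term-by-term in the same way swaps the second tensor factors, yielding $\phi(b_{(1)}h_{(1)})\xi(b_{(2)}h_{(2)})=(\phi\cdot\xi)(bh)$, which is exactly the condition $\phi\cdot\xi\in(H^*)^H$.

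The only subtlety—hardly an obstacle—is justifying that the trace identities for $\phi$ and $\xi$, which are stated pointwise on pairs of elements of $H$, propagate through the implicit sums in Sweedler notation. This is immediate from the bilinearity in $x,y$ of the expression $\phi(xy)-\phi(yx)$, so no additional input is needed. The argument uses exactly the bialgebra axiom (multiplicativity of $\Delta$) and linearity of $\phi,\xi$; the Hopf/antipode structure is not required here.
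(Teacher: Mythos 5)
Your argument is correct and is essentially the paper's own proof: you unwind the dual bimodule action to see that $f\in(H^*)^H$ means exactly $f(hb)=f(bh)$, and then perform the same Sweedler computation $(\phi\cdot\xi)(hb)=\phi(h_{(1)}b_{(1)})\xi(h_{(2)}b_{(2)})=\phi(b_{(1)}h_{(1)})\xi(b_{(2)}h_{(2)})=(\phi\cdot\xi)(bh)$, with the term-by-term justification and the check that $\varepsilon\in(H^*)^H$ being harmless extra care the paper leaves implicit.
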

\begin{proof}
Observe that $\phi\in (H^*)^H$ if and only if $\phi(hh')=\phi(h'h)$ for all~$h,h'\in H$. Then, given $h,h'\in H$ and $\xi,\xi'\in(H^*)^H$ we have 
\begin{equation*}
(\xi\cdot\xi')(hh')=\xi(h_{(1)}h'_{(1)})\xi'(h_{(2)}h'_{(2)})=\xi(h'_{(1)}h_{(1)})\xi'(h'_{(2)}h_{(2)})=(\xi\cdot\xi')(h'h).\qedhere
\end{equation*}
\end{proof}

\begin{proof}[Proof of Theorem~\ref{thm:main-1}] 
Note that in the category of $\kk$-vector spaces there is a natural isomorphism 
$\kappa:(V\tensor V^\star)\tensor (V'\tensor V'{}^\star)\to(V\tensor V')\tensor (V\tensor V')^\star$, 
$\kappa(v\tensor f\tensor v'\tensor f')=v\tensor v'\tensor f'\tensor f$, $v\in V$, $v'\in V'$,
$f\in V^\star$, $f'\in V'{}^\star$. Then, clearly,
$\la\cdot,\cdot\ra_{V\tensor V'}\circ\kappa=\la \cdot,\cdot\ra_V\tensor \la \cdot,\cdot\ra_{V'}$ which immediately implies that
$\tilde\beta_V\tensor \tilde\beta_{V'}=\tilde\beta_{V\tensor V'}\circ \kappa$ where $\tilde\beta_U:=\beta_U\circ\pi_U$ where 
$\pi_U:U\tensor_\kk U^\star\to U\tensor_{D(U)} U^\star$ is the natural projection.
This proves the first assertion and also the second once we observe that $1_{V\tensor V'}=\kappa(1_V\tensor 1_{V'})$.
\end{proof}

\subsection{The Hopf algebra case}
Suppose now that $H=(H,m,\Delta,\varepsilon,S)$ is a Hopf algebra. Since~$H$ is naturally an $H$-bimodule,
$\ad:H\to\End_\kk H$ is a homomorphism of algebras. We also define $\ad^*:H^{op}\to \End_\kk H$
by $(\ad^*h)(h')=S(h_{(1)})h'S^2(h_{(2)})$, which is a homomorphism of algebras. Henceforth, given 
$a\in H^{\tensor n}$ we write it in Sweedler-like notation as $a=a_1\tensor\cdots\tensor a_n$ with summation understood.
\begin{proof}[Proof of Theorem~\ref{thm:main-2}]
We need the following equivalent descriptions of~$\mathscr A(H)$.
\begin{lemma}\label{lem:D}
Let $P=P_1\tensor P_2\in H\widehat\tensor H$. The following are equivalent:
\begin{enumerate}[{\rm(a)}]
 \item\label{lem:D.a} $P\cdot (S^2\tensor 1)\circ\Delta(h)=\Delta(h)\cdot P$;
 \item\label{lem:D.c} $(1\tensor h)\cdot P=(\ad^*h_{(1)})(P_1)\tensor P_2 h_{(2)}$;
 \item\label{lem:D.d} $(\ad^* h\tensor 1)(P)=(1\tensor\ad h)(P)$.
\end{enumerate}
\end{lemma}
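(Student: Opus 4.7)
The plan is to unpack each of (a), (c), (d) using the definitions of $\ad$ and $\ad^*$ together with the Sweedler notation $P=P_1\tensor P_2$, and then to translate between them using only coassociativity and the antipode axioms $S(h_{(1)})h_{(2)}=\varepsilon(h)1=h_{(1)}S(h_{(2)})$. Explicitly, after unpacking, (a) reads
\[ P_1 S^2(h_{(1)})\tensor P_2 h_{(2)} = h_{(1)} P_1 \tensor h_{(2)} P_2, \]
(c) reads
\[ P_1 \tensor h P_2 = S(h_{(1)}) P_1 S^2(h_{(2)}) \tensor P_2 h_{(3)}, \]
and (d) reads
\[ S(h_{(1)}) P_1 S^2(h_{(2)}) \tensor P_2 = P_1 \tensor h_{(1)} P_2 S(h_{(2)}). \]
I would then prove the two equivalences (a)$\Leftrightarrow$(c) and (c)$\Leftrightarrow$(d) by short Sweedler computations following the same template.

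For (a)$\Rightarrow$(c), I substitute $h \mapsto h_{(2)}$ in (a), use coassociativity to rewrite the resulting indices as $h_{(2)},h_{(3)}$ (reserving $h_{(1)}$), and left-multiply the first tensor factor by $S(h_{(1)})$: the right-hand side collapses to $P_1\tensor h P_2$ via $S(h_{(1)})h_{(2)}\tensor h_{(3)}=1\tensor h$, while the left becomes $S(h_{(1)}) P_1 S^2(h_{(2)}) \tensor P_2 h_{(3)}$, which is (c). The reverse direction (c)$\Rightarrow$(a) is the mirror move: substitute $h\mapsto h_{(2)}$ in (c) and left-multiply the first factor by $h_{(1)}$, using $h_{(1)}S(h_{(2)})=\varepsilon(h_{(1)})1$ to telescope the antipodes.

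For (c)$\Rightarrow$(d) I perform the analogous manoeuvre on the other side of the tensor: substitute $h\mapsto h_{(1)}$ in (c) and right-multiply the second factor by $S(h_{(2)})$, so that $P_2 h_{(3)} S(h_{(4)}) = P_2 \varepsilon(h_{(3)})$ absorbs the tail and leaves exactly (d). The converse (d)$\Rightarrow$(c) goes by substituting $h\mapsto h_{(1)}$ in (d) and right-multiplying the second factor by $h_{(2)}$, so that $P_2 S(h_{(2)}) h_{(3)} = P_2$ collapses the tail.

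The computations are purely formal Sweedler bookkeeping and present no conceptual obstacle; the main thing to watch is the consistent relabelling of Sweedler indices after substitution and coassociativity. The only other point that merits a brief justification is that $P$ lies in the completion $H\widehat\tensor H$ rather than in $H\tensor H$, so one must note that left/right multiplication by elements of $H$ and the application of $S,S^2$ to a single tensor factor extend to this completion — but this is built into the ambient setup under which $\mathscr A(H)$ is defined, so all manipulations above are legitimate.
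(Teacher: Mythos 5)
Your proof is correct and follows essentially the same route as the paper: unpack (a), (c), (d) in Sweedler notation, apply each identity to a coproduct leg of $h$, and multiply by (antipoded) reserved legs so the antipode--counit axioms telescope the extra factors; the paper derives (c) and (d) simultaneously from a four-fold tensor version of (a) and closes the cycle with the same (c)$\Rightarrow$(a) and (d)$\Rightarrow$(c) moves you use, so your choice to prove (c)$\Leftrightarrow$(d) instead of (a)$\Rightarrow$(d) is an immaterial rerouting. The only caveat is notational: identities like ``$h_{(1)}S(h_{(2)})=\varepsilon(h_{(1)})1$'' should be stated in their telescoped form (e.g.\ $h_{(1)}S(h_{(2)})\tensor h_{(3)}=1\tensor h$), but your computations are the intended ones and go through.
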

\begin{proof}
By~\eqref{lem:D.a} we have
$
h_{(1)}\tensor P_1 S^2(h_{(2)})\tensor P_2 h_{(3)}\tensor h_{(4)}=h_{(1)}\tensor h_{(2)}P_1\tensor h_{(3)}P_2\tensor h_{(4)}
$
for all $h\in H$.
Then~\eqref{lem:D.c} and~\eqref{lem:D.d} follow by applying 
$m(S\tensor 1)\tensor 1\tensor \varepsilon$ and $m(S\tensor 1)\tensor m(1\tensor S)$, respectively, to both sides. Part~\eqref{lem:D.c}
implies~\eqref{lem:D.a} since
$h_{(1)}(\ad^* h_{(2)})(h')=h'S^2(h)$.
Finally, \eqref{lem:D.d} implies~\eqref{lem:D.c} since 
$(\ad^*h_{(1)})(P_1)\tensor P_2h_{(2)}=P_1\tensor \ad h_{(1)}(P_2)h_{(2)}=P_1\tensor hP_2$.
\end{proof}
\begin{lemma}\label{lem:E}
Let $B$ be an $H$-bimodule and set $B^{\diamond H}:=\{ b\in B\,:\, h\diamond b=\varepsilon(h)b,\,h\in H\}$.
Then $B^H\subset B^{\diamond H}\subset B^{S(H)}$ with the equality if $S$ is invertible.
\end{lemma}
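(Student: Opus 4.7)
I handle the two inclusions separately; the equality in the invertible case then follows for free. The inclusion $B^H \subset B^{\diamond H}$ is immediate: for $b \in B^H$ one has $k \lact b = b \ract k$ for every $k \in H$, hence in particular $S^2(h_{(2)}) \lact b = b \ract S^2(h_{(2)})$, whence
\[
\sum S^2(h_{(2)}) \lact b \ract S(h_{(1)}) = \sum b \ract S^2(h_{(2)}) S(h_{(1)}) = b \ract S\!\Bigl(\sum h_{(1)} S(h_{(2)})\Bigr) = \varepsilon(h) b,
\]
using that $S$ is an algebra anti-homomorphism and the antipode identity $\sum h_{(1)} S(h_{(2)}) = \varepsilon(h) 1_H$.

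For the core inclusion $B^{\diamond H} \subset B^{S(H)}$ my plan is first to rewrite the $\diamond$-invariance in a more convenient form. Since $\Delta(S(h)) = \sum S(h_{(2)}) \tensor S(h_{(1)})$ and $\varepsilon \circ S = \varepsilon$, the substitution $g := S(h)$ converts the defining condition into the identity
\[
\sum S(g_{(1)}) \lact b \ract g_{(2)} = \varepsilon(g) b, \qquad g \in S(H).
\]
Since $\Delta(S(H)) \subset S(H) \tensor S(H)$, this identity can be applied to $g_{(2)}$ itself whenever $g \in S(H)$. Combined with the counit identity $g = \sum g_{(1)} \varepsilon(g_{(2)})$, coassociativity (flattening the iterated Sweedler indices into $\Delta^2(g) = \sum g_{(1)} \tensor g_{(2)} \tensor g_{(3)}$) and the Hopf identity $\sum g_{(1)} S(g_{(2)}) \tensor g_{(3)} = 1_H \tensor g$ in $H \tensor H$, one computes
\begin{align*}
g \lact b &= \sum g_{(1)} \lact [\varepsilon(g_{(2)}) b] = \sum g_{(1)} \lact [S(g_{(2)}) \lact b \ract g_{(3)}] \\
&= \sum (g_{(1)} S(g_{(2)})) \lact b \ract g_{(3)} = b \ract g,
\end{align*}
valid for every $g \in S(H)$; hence $b \in B^{S(H)}$.

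Finally, if $S$ is invertible then it is surjective, so $S(H) = H$ and $B^{S(H)} = B^H$; the chain $B^H \subset B^{\diamond H} \subset B^{S(H)} = B^H$ then collapses to equality throughout. The step I expect to be least transparent is the substitution $g := S(h)$, which converts the $S^2$-twist inherent in $\diamond$ into a plain $S$-twist; once that reformulation is in hand, the remainder is routine Hopf-algebraic bookkeeping via the standard counit, antipode, and coassociativity identities.
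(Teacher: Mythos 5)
Your proof is correct and takes essentially the same route as the paper's: the first inclusion is the same antipode computation ($S^2(h_{(2)})S(h_{(1)})=S(h_{(1)}S(h_{(2)}))=\varepsilon(h)1$), and your second inclusion, after the substitution $g=S(h)$, is just a repackaging of the paper's direct calculation of $S(h)\lact b$ obtained by applying the $\diamond$-invariance to the middle Sweedler leg and using $S(h_{(2)})h_{(3)}=\varepsilon(h_{(2)})1$. The equality for invertible $S$ is handled in the same (immediate) way, via $S(H)=H$.
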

\begin{proof}
Let $h\in H$. Then for all $b\in B^H$ 
we have $h\diamond b=S^2(h_{(2)})\lact b\ract S(h_{(1)})=S^2(h_{(2)})S(h_{(1)})\lact b=S(h_{(1)}S(h_{(2)}))\lact b=
\varepsilon(h)b$. On the other hand, for all $b\in H^{\diamond H}$,
$
S(h)\lact b=\varepsilon(h_{(1)})S(h_{(2)})\lact m=S(h_{(3)})S^2(h_{(2)})\lact m\ract S(h_{(1)})=S(S(h_{(2)})h_{(3)})\lact m\ract S(h_{(1)})=m\ract S(h)
$.
\end{proof}
The following Lemma is well-known and can be proved similarly.
\begin{lemma}\label{lem:F}
$Z(H)=H^H=H^{\ad H}:=\{ h'\in H\,:\, (\ad h)(h')=\varepsilon(h)h',\,h\in H\}$.\qed
\end{lemma}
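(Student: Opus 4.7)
First I would note that the regular $H$-bimodule structure on~$H$ is just multiplication on either side: $h\lact b=hb$ and $b\ract h=bh$. Consequently, membership in~$H^H$ says exactly that $hb=bh$ for all $h\in H$, so the equality $Z(H)=H^H$ is tautological. This reduces the content of the lemma to identifying $Z(H)$ with $H^{\ad H}$, which is entirely a matter of standard Hopf-algebraic manipulation with the antipode axiom $h_{(1)}S(h_{(2)})=\varepsilon(h)=S(h_{(1)})h_{(2)}$.

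For the inclusion $Z(H)\subseteq H^{\ad H}$, I would simply pull $b$ out of the middle of $(\ad h)(b)=h_{(1)}bS(h_{(2)})$ using centrality and collapse $h_{(1)}S(h_{(2)})=\varepsilon(h)$, giving $(\ad h)(b)=\varepsilon(h)b$ in one line. For the converse $H^{\ad H}\subseteq Z(H)$, the trick is to recover $bh$ by introducing an extra Sweedler factor: starting from $bh=\varepsilon(h_{(1)})b\cdot h_{(2)}$ and applying the hypothesis $(\ad h_{(1)})(b)=\varepsilon(h_{(1)})b$, I would rewrite
\[
bh=(\ad h_{(1)})(b)\cdot h_{(2)}=h_{(1)}b\,S(h_{(2)})h_{(3)}=h_{(1)}b\,\varepsilon(h_{(2)})=hb,
\]
using coassociativity and then $S(h_{(2)})h_{(3)}=\varepsilon(h_{(2)})$ to cancel the internal factors. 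This yields $hb=bh$ for all $h\in H$, hence $b\in Z(H)$.

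There is no genuine obstacle here, which is presumably why the paper already labels this lemma as well-known and omits the proof. The only mild subtlety is the bookkeeping in the converse direction: one must choose to multiply $(\ad h_{(1)})(b)$ on the \emph{right} by $h_{(2)}$ (rather than, say, on the left) so that the surviving Sweedler factors pair up as $S(h_{(2)})h_{(3)}$ and collapse via the antipode axiom, leaving $hb$ on the nose. Any other assembly of the three factors $h_{(1)},h_{(2)},h_{(3)}$ would fail to produce a usable identity.
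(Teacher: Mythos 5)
Your argument is correct and is exactly the standard Sweedler/antipode manipulation the paper has in mind: it states the lemma without proof as well-known, to be proved ``similarly'' to Lemma~\ref{lem:E}, whose proof uses the same two steps (collapse $h_{(1)}S(h_{(2)})=\varepsilon(h)$ for one inclusion, insert $\varepsilon(h_{(1)})$ and use coassociativity with $S(h_{(2)})h_{(3)}=\varepsilon(h_{(2)})$ for the converse). So your proposal matches the intended proof; only your closing claim that no other assembly of the factors would work is an unnecessary (and overly strong) aside.
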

By Lemma~\ref{lem:D}\eqref{lem:D.d} we have, for all $h\in H$, $\xi\in H^*_{\mathscr C}$
$$
\Phi_P(h\diamond \xi)=(S^2(h_{(2)})\lact\xi\ract S(h_{(1)}))(P_1)P_2=\xi((\ad^*h)P_1)P_2=\xi(P_1)(\ad h)(P_2)=(\ad h)\Phi_P(\xi).
$$
Furthermore, if $\xi\in (H^*_{\mathscr C})^H$ then $\Phi_P(h\diamond \xi)=\varepsilon(h)\Phi_P(\xi)=(\ad h)\Phi_P(\xi)$,
whence $\Phi_P(\xi)\in Z(H)$.
\end{proof}
\begin{proof}[Proof of Theorem~\ref{thm:main-thm}]
Suppose that $P$ satisfies 
$(\Delta\tensor 1)(P)=t_1 P_1 t_2\tensor t_3 P'_1 t_4\tensor P_2 P'_2$, for some $T=t_1\tensor t_2\tensor t_3\tensor t_4\in H^{\widehat\tensor 4}$
where $P=P_1\tensor P_2=P'_1\tensor P'_2$. 
Then for any $\xi,\xi'\in H^*_{\mathscr C}$
\begin{equation}
\begin{split}
\Phi_P(\xi\cdot \xi')=(\xi\cdot \xi')(P_1)P_2=\xi( t_1P_1t_2)\xi'(t_3P'_1t_4)P_2P'_2=(t_2\lact \xi\ract t_1)(P_1)(t_4\lact \xi'\ract t_3)(P'_1)P_2P'_2
\\=\Phi_P(t_2\lact \xi\ract t_1)\cdot\Phi_P(t_4\lact \xi'\ract t_3).\label{eq:Phi-mult}
\end{split}
\end{equation}
Take $\xi\in H^*_V$, $\xi'\in H^*_{V'}$. Then $\xi\cdot \xi'\in H^*_{V\tensor V'}$ by Theorem~\ref{thm:main-1}\eqref{thm:main-1.a} and 
$\Phi_P(\xi\cdot\xi')\in H_{V,P}\cdot H_{V',P}$ by~\eqref{eq:Phi-mult}. Therefore, $P\in\mathscr M(H)$. 
Furthermore, assume that $t_2t_1\tensor t_4t_3=1\tensor 1$, and let 
$\xi,\xi'\in (H^*_{\mathscr C})^H$. Then~\eqref{eq:Phi-mult} yields 
$\Phi_P(\xi\cdot \xi')=\Phi_P(t_2t_1\lact \xi)\cdot\Phi_P(t_4t_3\lact \xi')=\Phi_P(\xi)\cdot\Phi_P(\xi')$.
This implies that~$P\in\mathscr M_0(H)$.
\end{proof}

%\subsection{An example}\label{subs:dihedral}

\subsection{Applications}\label{subs:Joseph}
Let $\mathscr R(H)$ be the set of pairs $(R^+,R^-)$, $R^\pm\in H\widehat\tensor H$, such that $R^+_{21}R^-\cdot\Delta(h)=\Delta(h)\cdot R^+_{21}R^-$ 
for all~$h\in H$ and
$
(\Delta\tensor 1)(R^\pm)=R^\pm_{13}R^\pm_{23}$, $(1\tensor\Delta)(R^+)=R^+_{13}R^+_{12}
$.
Clearly, $(R,R)\in\mathscr R(H)$ if $R$ is an $R$-matrix for~$H$.
\begin{lemma}\label{lem:P-R}
Suppose that there exists $\mathbf g\in H$ group-like such that $\mathbf gS^2(h)=h\mathbf g$ for all~$h\in H$. 
Let $(R^+,R^-)\in\mathscr R(H)$.
Then $P:=R^+_{21}\cdot R^-\cdot (\mathbf g\tensor 1)\in\mathscr A(H)\cap \mathscr M_0(H)$.
\end{lemma}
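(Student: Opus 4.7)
The plan is to verify the two membership conditions for~$P=R^+_{21}R^-(\mathbf{g}\tensor 1)$ separately, relying on the three axioms defining~$\mathscr R(H)$ together with the intertwining property~$\mathbf{g}S^2(x)=x\mathbf{g}$.

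For~$P\in\mathscr A(H)$ a direct calculation suffices. Starting from~$P\cdot(S^2\tensor 1)(\Delta(h))=R^+_{21}R^-(\mathbf{g}S^2(h_{(1)})\tensor h_{(2)})$, I would shift~$\mathbf{g}$ across~$S^2(h_{(1)})$ by the intertwining identity, obtaining~$R^+_{21}R^-\Delta(h)(\mathbf{g}\tensor 1)$, and then invoke the centralizer axiom~$R^+_{21}R^-\Delta(h)=\Delta(h)R^+_{21}R^-$ to match $\Delta(h)\cdot P$.

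For~$P\in\mathscr M_0(H)$ I would apply Theorem~\ref{thm:main-thm}. First I compute~$(\Delta\tensor 1)(P)$: since~$\mathbf g$ is group-like we have~$\Delta(\mathbf g)=\mathbf g\tensor\mathbf g$; the axiom~$(\Delta\tensor 1)(R^-)=R^-_{13}R^-_{23}$ applies directly to~$R^-$; and the axiom~$(1\tensor\Delta)(R^+)=R^+_{13}R^+_{12}$, after a cyclic permutation of tensor factors, yields the derived identity~$(\Delta\tensor 1)(R^+_{21})=R^+_{32}R^+_{31}$. Combining these gives
\[
(\Delta\tensor 1)(P)=R^+_{32}R^+_{31}R^-_{13}R^-_{23}(\mathbf g\tensor\mathbf g\tensor 1).
\]
The remaining task is to present this as~$(m\tensor m\tensor 1)((T\tensor 1)P_{15}P_{35})$ for some~$T\in H\widehat\tensor H\widehat\tensor H\widehat\tensor H$ satisfying~$(m^{op}\tensor m^{op})(T)=1\tensor 1$.

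The main obstacle is pinning down~$T$. Writing $R^+=\sum_i u_i\tensor v_i$ and~$R^-=\sum_j x_j\tensor y_j$, an expansion of~$P_{15}P_{35}$ followed by~$(m\tensor m\tensor 1)$ using the naive choice~$T=1^{\tensor 4}$ already produces the correct first two tensor slots and the correct~$\mathbf g$-placement, but the third slot reads~$u_iy_ju_ky_l$ rather than the required~$u_ku_iy_jy_l$. Thus~$T$ must encode a commutation that swaps a first factor of~$R^+$ past a second factor of~$R^-$. I expect~$T$ to be constructed from the still-unused axiom~$(\Delta\tensor 1)(R^+)=R^+_{13}R^+_{23}$: the plan is to assemble~$T$ from a pair of mutually-canceling components whose internal effect is exactly this commutation and whose product under~$(m^{op}\tensor m^{op})$ collapses to~$1\tensor 1$ automatically. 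Once such a~$T$ is exhibited and the rearrangement verified, Theorem~\ref{thm:main-thm} yields $P\in\mathscr M_0(H)$.
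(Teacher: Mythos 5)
Your first half (that $P\in\mathscr A(H)$) and your computation $(\Delta\tensensor 1)(P)=R^+_{32}R^+_{31}R^-_{13}R^-_{23}(\mathbf g\tensor\mathbf g\tensor 1)$ coincide with the paper's argument, and you have correctly located the obstruction: the first legs of $R^+$ and the second legs of $R^-$ come out grouped in the third tensor factor, whereas $P_2P'_2$ has them interleaved. But the proof stops exactly where the lemma becomes nontrivial: no $T$ is exhibited, and the route you sketch for producing one cannot work as stated. In the expression $(m\tensor m\tensor 1)\bigl((T\tensor 1)P_{15}P_{35}\bigr)$ the element $T$ multiplies only the first four slots, so after applying $m\tensor m\tensor 1$ the third factor is $P_2P'_2$ \emph{for every choice of} $T$; no ``pair of mutually-canceling components'' inside $T$ can implement the commutation you need in that slot. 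The reordering has to be achieved by a different mechanism, namely by moving the offending leg of $R^+$ \emph{through $P$ itself}, using the membership $P\in\mathscr A(H)$ that you have already established.

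Concretely (this is what the paper does): writing $R^+=r^+_1\tensor r^+_2=s^+_1\tensor s^+_2$ and $P=P_1\tensor P_2=P'_1\tensor P'_2$, one first rewrites $(\Delta\tensor 1)(P)=P_1\tensor r^+_2r^-_1\mathbf g\tensor r^+_1P_2r^-_2$, then applies Lemma~\ref{lem:D}\eqref{lem:D.c} with $h=r^+_1$, expanding $\Delta(r^+_1)$ by the axiom $(\Delta\tensor 1)(R^+)=R^+_{13}R^+_{23}$ (whose first two legs are $r^+_1\tensor s^+_1$ with second leg $r^+_2s^+_2$). This transfers $r^+_1$ out of the third slot into the first slot as $\ad^*r^+_1$, giving
$$
(\Delta\tensor 1)(P)=S(r^+_1)\,P_1\,S^2(s^+_1)\tensor r^+_2s^+_2\,P'_1\tensor P_2P'_2,
$$
i.e.\ $T=S(r^+_1)\tensor S^2(s^+_1)\tensor r^+_2s^+_2\tensor 1=(S\tensor S^2\tensor 1\tensor 1)(R^+_{13}R^+_{23})$. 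Note that the antipodes enter through $\ad^*$; a $T$ of the shape you envisage, built only from legs of $R^+$ without $S$, is not what comes out. Finally, $(m^{op}\tensor m^{op})(T)=1\tensor 1$ is not automatic: it is the identity $S^2(s^+_1)S(r^+_1)\tensor r^+_2s^+_2=(S\tensor 1)\bigl(R^+\cdot(S\tensor 1)(R^+)\bigr)=1\tensor 1$, which uses the coproduct axiom for $R^+$ (together with its counit normalization) to show that $(S\tensor 1)(R^+)$ inverts $R^+$. Without exhibiting this $T$ and verifying that identity, the appeal to Theorem~\ref{thm:main-thm} is not justified, so as written the proof of $P\in\mathscr M_0(H)$ is incomplete.
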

\begin{proof}
Write $R^\pm=r^\pm_1\tensor r^\pm_2=s^\pm_1\tensor s^\pm_2$.
Since $R^+_{21}R^-\cdot\Delta(h)=\Delta(h)\cdot R^+_{21}R^-$ 
we have $$P\cdot (S^2\tensor 1)(\Delta(h))=r^+_2r^-_1\mathbf g S^2(h_{(1)})\tensor r^+_1r^-_2 h_{(2)}=r^+_2r^-_1 h_{(1)}\mathbf g\tensor r^+_1r^-_2 h_{(2)}
=\Delta(h)\cdot P.$$ Thus, $P\in \mathscr A(H)$. Furthermore, 
$(\Delta\tensor 1)(P)=R^+_{32}R^+_{31}R^-_{13}R^-_{23}(\mathbf g\tensor \mathbf g\tensor 1)=
P_1\tensor r^+_2r^-_1\mathbf g\tensor r^+_1 P_2r^-_2$. 
Since $(\Delta\tensor 1)(R^+)=
r^+_1\tensor s^+_1\tensor r^+_1s^+_1$, by Lemma~\ref{lem:D}\eqref{lem:D.c} we obtain 
\begin{multline*}
(\Delta\tensor 1)(P)=(\ad^* r^+_1)(P_1)\tensor r^+_2s^+_2r^-_1\mathbf g\tensor P_2 s^+_1r^-_2
=(\ad^* r^+_1)(P_1)\tensor r^+_2 P'_1\tensor P_2P'_2\\
=S(r^+_1)P_1S^2(s^+_1)\tensor r^+_2s^+_2 P'_1\tensor P_2P'_2.
\end{multline*}
Thus, $P\in\mathscr M(H)$ with $T=(S\tensor S^2\tensor 1\tensor 1)(R^+_{13}\cdot R^+_{23})$. Finally, 
$(m^{op}\tensor m^{op})(T)=
S^2(s^+_2)S(r^+_1)\tensor r^+_2s^+_2=(S\tensor 1)( R^+\cdot (S\tensor 1)(R^+))=1\tensor 1$.
Thus, $P\in\mathscr M_0(H)$.
\end{proof}
If~$P$ is as in Lemma~\ref{lem:P-R} we obtain 
\begin{equation}\label{eq:explicit-phi}
\Phi_P(\beta_V(v\tensor f))= r^+_1\la r_2^+r^-_1\mathbf g\lact v,f\ra_V r^-_2=r^+_1\la r^-_1\lact\mathbf g(v),f\ract r^+_2\ra_V r^-_2,\qquad 
v\in V,\, f\in V^\star.
\end{equation}

Let $\kk=\QQ(q)$ and let 
$U_q(\gg)$ be a quantized enveloping algebra corresponding to a 
symmetrizable Kac-Moody algebra~$\gg$ which is a Hopf algebra generated by $E_i$, $F_i$, $i\in I$ and $K_\mu$, $\mu\in\Lambda$, where $\Lambda$ is 
a weight lattice of~$\gg$, with $\Delta(E_i)=1\tensor E_i+E_i\tensor K_{\alpha_i}$, $\Delta(F_i)=F_i\tensor 1+K_{-\alpha_i}\tensor F_i$,
$\Delta(K_\mu)=K_\mu\tensor K_\mu$, $\varepsilon(E_i)=\varepsilon(F_i)=0$ and $\varepsilon(K_\mu)=1$, where $\alpha_i$, $i\in I$ are simple roots of~$\gg$.
Let $\mathcal K$ be the subalgebra of~$U_q(\gg)$ generated by the $K_\mu$, $\mu\in\Lambda$.
After~\cites{Dr,Lus-book}, there exists a unique $R$-matrix in a weight completion $U_q(\gg)\widehat\tensor U_q(\gg)$ of the form 
$R=R_0 R_1$ where $R_1\in U_q^+(\gg)\widehat\tensor U_q^-(\gg)$ is essentially $\Theta^{op}$ in the notation of~\cite{Lus-book} and satisfies
$(\varepsilon\tensor 1)(R_1)=(1\tensor \varepsilon)(R_1)=1\tensor 1$, while 
$R_0\in \mathcal K\widehat\tensor \mathcal K$ is determined by
the following condition: for any $\mathcal K$-modules
$V^\pm$ such that $K_\mu|_{V^\pm}=q^{(\mu,\mu_\pm)}\id_{V^\pm}$, $\mu,\mu_\pm\in\Lambda$, we have
$R_0|_{V^-\tensor V^+}=q^{(\mu_-,\mu_+)}\id_{V^-\tensor V^+}$. Here $(\cdot,\cdot)$ is the Kac-Killing form on $\Lambda\times\Lambda$
(\cite{Kac}). The following 
is immediate.
\begin{lemma}\label{lem:P_g}
Let $R=r_1\tensor r_2$ be as above. Let $v_\lambda\in V(\lambda)$ ($f_\lambda\in V(\lambda)^\star$) be a highest (respectively, lowest) 
weight vector of weight~$\lambda$ 
(respectively, $-\lambda$), $\lambda\in\Lambda^+$. Then $r_1\lact v_\lambda\tensor r_2=v_\lambda\tensor K_\lambda$ and $r_1\tensor f_\lambda\ract r_2=
K_\lambda\tensor f_\lambda$.\qed
\end{lemma}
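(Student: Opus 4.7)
My plan is to exploit the factorization $R=R_0R_1$ together with the annihilation properties of the highest/lowest weight vectors, treating the two identities in parallel.

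First I would verify that $R_1\lact(v_\lambda\tensor 1)=v_\lambda\tensor 1$. The hypotheses $R_1\in U_q^+(\gg)\widehat\tensor U_q^-(\gg)$ and $(\varepsilon\tensor 1)(R_1)=1\tensor 1$, combined with the $\Lambda$-grading, force $R_1=1\tensor 1+R_1'$ where every summand of $R_1'$ has its first tensor factor in $U_q^+(\gg)_\beta$ for some positive $\beta$. Since $v_\lambda$ is a highest weight vector, every element of $U_q^+(\gg)_{>0}$ annihilates it, and only the $1\tensor 1$ summand contributes.

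Next I would apply $R_0$. Writing $R_0=\sum_a A_a\tensor B_a$ with $A_a,B_a\in\mathcal K$ inside an appropriate completion, we get $R_0\lact(v_\lambda\tensor 1)=\sum_a\chi_\lambda(A_a)\,v_\lambda\tensor B_a = v_\lambda\tensor K$, where $\chi_\lambda(K_\mu)=q^{(\mu,\lambda)}$ and $K=\sum_a\chi_\lambda(A_a)B_a$. The defining property of $R_0$, applied with $V^-=\kk v_\lambda$ and $V^+$ a one-dimensional $\mathcal K$-module of arbitrary weight $\mu_+$, yields $\sum_a\chi_\lambda(A_a)\chi_{\mu_+}(B_a)=q^{(\lambda,\mu_+)}$. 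This says precisely that $K$ acts on every weight vector exactly as $K_\lambda$ does, forcing $K=K_\lambda$ in the completion of $\mathcal K$.

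For the second identity the strategy is entirely symmetric. I would first record that $f_\lambda$ pairs nontrivially only with $V(\lambda)_\lambda$, so that $f_\lambda\ract K_\mu=q^{(\mu,\lambda)}f_\lambda$ and $(f_\lambda\ract F_i)(v)=f_\lambda(F_iv)=0$ for every weight vector $v$, since $\lambda+\alpha_i$ is not a weight of $V(\lambda)$. Hence $f_\lambda$ is annihilated on the right by $U_q^-(\gg)_{<0}$, so only the $1\tensor 1$ summand of $R_1$ contributes in the second factor, and the analogous computation for $R_0$, now collecting the first tensor coordinate and summing against $\chi_\lambda(B_a)$, identifies the first slot with $K_\lambda$ via the same defining property (this time with $\mu_+=\lambda$ and $\mu_-$ free). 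The only delicate point is bookkeeping with sign conventions: the statement records the weight of $f_\lambda$ as $-\lambda$ (its weight in $V(\lambda)^\star$ viewed as a left module via the antipode), even though the right action satisfies $f_\lambda\ract K_\mu=q^{(\mu,\lambda)}f_\lambda$; once this convention is pinned down, both identities reduce to a direct unpacking of the definitions of $R_0$ and $R_1$.
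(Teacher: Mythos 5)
Your argument is correct and is exactly the unpacking the paper has in mind: the triangular form of $R_1$ (only its $1\tensor 1$ term survives against the highest weight vector $v_\lambda$, respectively against $f_\lambda$ on the right) plus the defining weight property of $R_0$, which identifies the remaining leg with $K_\lambda$ in the weight completion. The paper states the lemma as immediate and gives no proof, so there is no divergence to report; your handling of the weight convention for $f_\lambda$ (right action by $K_\mu$ through $\chi_\lambda$ despite its weight being $-\lambda$) is the only point that needed care, and you got it right.
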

\begin{proof}[Proof of Theorem~\ref{thm:joseph-decomp}]
Since $V(\lambda)$ is a simple highest weight module, $D(V(\lambda))\cong \kk$. Note that for any $\lambda,\mu\in\Lambda^+$, $V(\lambda)\tensor V(\mu)$ is a 
simple $U_q(\lie g\oplus\lie g)=U_q(\lie g)\tensor U_q(\lie g)$-module of highest weight $(\lambda,\mu)$. Twisting $V(\mu)$ with the anti-automorphism 
of $U_q(\lie g)$ interchanging $F_i$ and~$E_i$, we conclude that $V(\lambda)\tensor V(\lambda)^\star$ is a simple $U_q(\lie g)$-bimodule.
Taking into account that $\mathbf g=K_{-2\rho}$ we obtain from Lemma~\ref{lem:P_g} and~\eqref{eq:explicit-phi} that $\Phi_P(\beta_{V(\lambda)}(v_\lambda\tensor f_\lambda))=
K_\lambda\la \mathbf g\lact v_\lambda,f_\lambda\ra K_\lambda\in \kk^\times K_{2\lambda}$. Since $V(\lambda)\tensor V(\lambda)^\star$ is cyclic on~$v_\lambda\tensor f_\lambda$ as $U_q(\gg)$-module with the $\diamond$ action, $H_{V(\lambda)}$ is cyclic on~$K_{2\lambda}$ as the $\ad U_q(\gg)$-module by the above.
Since $\beta_{V(\lambda)}$ is injective by Theorem~\ref{prop:Fos-0}\eqref{prop:Fos-0.c} and $\Phi_P$ is injective by~\cite{Dr}, it follows that $H_{V(\lambda)}\cong V(\lambda)\tensor V(\lambda)^\star$. This proves~\eqref{thm:joseph-decom.a}. Then the sum in~\eqref{thm:joseph-decom.b} is direct by 
%Part~\eqref{thm:joseph-decom.b}\eqref{
%To prove the second assertion, let $\mathcal O_q(G)$ be the quantum coordinate algebra corresponding to~$\gg$. Then by~\cite{joseph-book}
%$\mathcal O_q(G)\cong H^*_{\mathscr C_\gg}$ and it remains 
%to apply 
Proposition~\ref{cor:main-cor}\eqref{cor:main-cor.a} and coincides with $H_{\mathscr C_{\lie g},P}$
which is always a subalgebra of~$H$.
\end{proof}

\begin{proof}[Proof of Theorem~\ref{thm:centre}]
Since~$D(V(\lambda))\cong\kk$, Theorem~\ref{thm:joseph-decomp} implies that 
$Z(H_{\mathscr C_\gg,P_\gg})=\bigoplus_{\lambda\in\Lambda^+} \kk c_{V(\lambda)}$, hence the assignment $|V(\lambda)|\mapsto c_{V(\lambda)}$
is an  isomorphism
$\kk\tensor_\ZZ K_0(\mathscr C_\gg)\to \Phi_{P_\gg}((H^*_{\mathscr C_\gg})^H)=Z(H_{\mathscr C_\gg,P_{\gg}})$
as in Proposition~\ref{cor:main-cor}\eqref{cor:main-cor.b}.
By~\cite{Lus},
$K_0(\mathscr C_\gg)=K_0(\gg-\operatorname{mod})$ where $\gg-\operatorname{mod}$
is the category of finite dimensional $\gg$-modules. 
On the other hand, each non-zero element of $Z(U_q(\gg))$ is $\ad$-invariant, hence generates a one-dimensional
$\ad U_q(\gg)$-module and thus is 
contained in $H_{\mathscr C_\gg,P_\gg}$ 
by~\cite{joseph-mock}. Therefore, $Z(U_q(\gg))\subset H_{\mathscr C_\gg,P_\gg}$ hence $Z(U_q(\gg))=Z(H_{\mathscr C_\gg,P_\gg})$.
\end{proof}

\begin{bibdiv}
\begin{biblist}

\bib{CP}{book}{
   author={Chari, Vyjayanthi},
   author={Pressley, Andrew},
   title={A guide to quantum groups},
   publisher={Cambridge University Press, Cambridge},
   date={1994},
}

\bib{Dr}{article}{
   author={Drinfel{\cprime}d, V. G.},
   title={Almost cocommutative Hopf algebras},
   language={Russian},
   journal={Algebra i Analiz},
   volume={1},
   date={1989},
   number={2},
   pages={30--46},
}

\bib{FD}{book}{
   author={Farb, Benson},
   author={Dennis, R. Keith},
   title={Noncommutative algebra},
   series={Graduate Texts in Mathematics},
   volume={144},
   publisher={Springer-Verlag, New York},
   date={1993},
   pages={xiv+223},
   isbn={0-387-94057-X},
%   review={\MR{1233388 (94j:16001)}},
%   doi={10.1007/978-1-4612-0889-1},
}

\bib{Fos}{thesis}{
author={Foster, John},
title={\href{https://scholarsbank.uoregon.edu/xmlui/bitstream/handle/1794/13269/Foster_oregon_0171A_10698.pdf?sequence=1}{Semisimplicity of certain representation categories}},
type={Ph.D. thesis},
date={2013},
organization={U. of Oregon Eugene}
}

% \bib{joseph-book}{book}{
%    author={Joseph, Anthony},
%    title={Quantum groups and their primitive ideals},
%    series={Ergebnisse der Mathematik und ihrer Grenzgebiete (3)
%    },
%    volume={29},
%    publisher={Springer-Verlag, Berlin},
%    date={1995},
% 
% 
% 
% 
% }

\bib{joseph-mock}{article}{
   author={Joseph, Anthony},
   title={On the mock Peter-Weyl theorem and the Drinfeld double of a
   double},
   journal={J. Reine Angew. Math.},
   volume={507},
   date={1999},
   pages={37--56},
   issn={0075-4102},
}

\bib{Kac}{book}{
   author={Kac, Victor G.},
   title={Infinite-dimensional Lie algebras},
   edition={2},
   publisher={Cambridge University Press, Cambridge},
   date={1985},

   isbn={0-521-32133-6},

}

\bib{Lus}{article}{
   author={Lusztig, George},
   title={Quantum deformations of certain simple modules over enveloping
   algebras},
   journal={Adv. in Math.},
   volume={70},
   date={1988},
   number={2},
   pages={237--249},
}
\bib{Lus-book}{book}{
   author={Lusztig, George},
   title={Introduction to quantum groups},
   series={Progress in Mathematics},
   volume={110},
   publisher={Birkh\"auser, Boston, MA},
   date={1993},
}

\bib{Maj}{book}{
   author={Majid, Shahn},
   title={Foundations of quantum group theory},
   publisher={Cambridge University Press, Cambridge},
   date={1995},
   pages={x+607},
   isbn={0-521-46032-8},
}

\bib{Ros}{article}{
   author={Rosso, Marc},
   title={Analogues de la forme de Killing et du th\'eor\`eme
   d'Harish-Chandra pour les groupes quantiques},
   journal={Ann. Sci. \'Ecole Norm. Sup. (4)},
   volume={23},
   date={1990},
   number={3},
   pages={445--467},
   issn={0012-9593},
}

\bib{RST}{article}{
   author={Reshetikhin, N. Yu.},
   author={Semenov-Tian-Shansky, M. A.},
   title={Quantum $R$-matrices and factorization problems},
   journal={J. Geom. Phys.},
   volume={5},
   date={1988},
   number={4},
   pages={533--550 (1989)},
}
\bib{Sch}{article}{
   author={Schneider, H.-J.},
   title={Some properties of factorizable Hopf algebras},
   journal={Proc. Amer. Math. Soc.},
   volume={129},
   date={2001},
   number={7},
   pages={1891--1898 (electronic)},
   issn={0002-9939},
}

\bib{Sh}{article}{
   author={Semikhatov, A. M.},
   title={Factorizable ribbon quantum groups in logarithmic conformal field
   theories},
     journal={Theoret. and Math. Phys.},
      volume={154},
      date={2008},
      number={3},
      pages={433--453},
      issn={0040-5779},
}

\end{biblist}
\end{bibdiv}
\end{document}